\newcommand{\bl}[1]{\textcolor{blue}{#1}}
\definecolor{mypurple}{rgb}{.4,.0,.5}
\def\y{{\bf y}}
\def\x{{\bf x}}
\def\x{{\mathbf x}}
\def\u{{\bf u}}
\def\x{{\bf x}}
\def\y{{\bf y}}
\def\q{{\bf q}}
\def\m{{\bf m}}
\def\b{{\bf b}}
\def\c{{\bf c}}
\def\h{{\bf h}}
\def\cH{{\mathcal H}}
\def\be{\begin{equation}}
\def\ee{\end{equation}}
\def\ba{\left[\begin{array}}
\def\ea{\end{array}\right]}
\def\u{{\bf u}}
\def\x{{\bf x}}
\def\y{{\bf y}}
\def\q{{\bf q}}
\def\b{{\bf b}}
\def\c{{\bf c}}
\def\p{{\bf p}}
\def\1{{\bf 1}}
\def\0{{\bf 0}}
\def\erf{\mbox{erf}}
\def\erfc{\mbox{erfc}}
\def\calX{{\cal X}}
\def\calY{{\cal Y}}
\def\mR{{\mathbb R}}
\def\mN{{\mathbb N}}
\def\mE{{\mathbb E}}
\def\mS{{\mathbb S}}
\def\mP{{\mathbb P}}
\def\lp{\left (}
\def\rp{\right )}
\def\y{{\bf y}}
\def\x{{\bf x}}
\def\x{{\mathbf x}}
\def\u{{\bf u}}
\def\x{{\bf x}}
\def\y{{\bf y}}
\def\q{{\bf q}}
\def\b{{\bf b}}
\def\c{{\bf c}}
\def\h{{\bf h}}
\def\cH{{\cal H}}
\def\be{\begin{equation}}
\def\ee{\end{equation}}
\def\ba{\left[\begin{array}}
\def\ea{\end{array}\right]}
\def\u{{\bf u}}
\def\x{{\bf x}}
\def\y{{\bf y}}
\def\q{{\bf q}}
\def\b{{\bf b}}
\def\c{{\bf c}}
\def\p{{\bf p}}
\def\({\left (}
\def\){\right )}
\def\1{{\bf 1}}
\def\m{{\bf m}}
\def\q{{\bf q}}
\def\0{{\bf 0}}
\def\cX{{\mathcal X}}
\def\cY{{\mathcal Y}}
\definecolor{darkgreen}{rgb}{0, 0.4,0}
\definecolor{purplebrown}{rgb}{0.5,0.1,0.6}
\definecolor{ultclupcol}{rgb}{0.1,0.5,0.5}
\definecolor{mytrycolor}{rgb}{0.5,0.7,0.2}
\definecolor{ultclupcola}{rgb}{.5,0,.5}
\definecolor{shadebrown}{rgb}{0.1,0.1,0.9}
\definecolor{lightblue}{rgb}{0.2,0,1}
\newtcbox{\xmybox}{on line,
arc=7pt,
before upper={\rule[-3pt]{0pt}{10pt}},boxrule=0pt,
boxsep=0pt,left=6pt,right=6pt,top=0pt,bottom=0pt,enhanced, coltext=blue, colback=white!10!yellow}
\newtcbox{\xmyboxa}{on line,
arc=7pt,
before upper={\rule[-3pt]{0pt}{10pt}},boxrule=0pt,
boxsep=0pt,left=6pt,right=6pt,top=0pt,bottom=0pt,enhanced, colback=white!10!yellow}
\newtcbox{\xmyboxb}{on line,
arc=7pt,
before upper={\rule[-3pt]{0pt}{10pt}},boxrule=1pt,colframe=darkgreen!100!blue,
boxsep=0pt,left=6pt,right=6pt,top=0pt,bottom=0pt,enhanced, colback=white!10!yellow}
\newtcbox{\xmyboxc}{on line,
arc=7pt,
before upper={\rule[-3pt]{0pt}{10pt}},boxrule=.7pt,colframe=blue!100!blue,
boxsep=0pt,left=6pt,right=6pt,top=0pt,bottom=0pt,enhanced, coltext=blue, colback=white!10!yellow}
\newtcbox{\xmytboxa}{on line,
arc=7pt,
before upper={\rule[-3pt]{0pt}{10pt}},boxrule=.0pt,colframe=pink!50!yellow,
boxsep=0pt,left=6pt,right=6pt,top=0pt,bottom=0pt,enhanced, coltext=white, colback=blue!40!red}
\newtcbox{\xmytboxb}{on line,
arc=7pt,
before upper={\rule[-3pt]{0pt}{10pt}},boxrule=.0pt,colframe=pink!50!yellow,
boxsep=0pt,left=6pt,right=6pt,top=0pt,bottom=0pt,enhanced, coltext=white, colback=white!40!green}
\newcommand\subsubsubsection{\@startsection{paragraph}{4}{\z@}{-2.5ex\@plus -1ex \@minus -.25ex}{1.25ex \@plus .25ex}{\normalfont\normalsize\bfseries}}
\newcommand\subsubsubsubsection{\@startsection{subparagraph}{5}{\z@}{-2.5ex\@plus -1ex \@minus -.25ex}{1.25ex \@plus .25ex}{\normalfont\normalsize\bfseries}}
\newtheorem{theorem}{Theorem}
\newtheorem{corollary}{Corollary}
\begin{document}

\begin{singlespace}

\title {Binary perceptrons capacity via fully lifted random duality theory %A tight variant of Gordon's escape through a mesh theorem
%\footnote{ This work was supported in
%part.}
}
\author{
\textsc{Mihailo Stojnic
\footnote{e-mail: {\tt flatoyer@gmail.com}} }}
\date{}
\maketitle

%%%%%%%%%%%%%%%%%%%%%%%%%%%%%%%%%%%%%%%%%%%%%%%%%%%%%%%%%%%%%%%%%%%%%%%%%%%%%%%%
%%%%%%%%%%%%%%%%%%%%%%%%%%%%%%%%%%%%%%%%%%%%%%%%%%%%%%%%%%%%%%%%%%%%%%%%%%%%%%%%
\centerline{{\bf Abstract}} \vspace*{0.1in}
%%%%%%%%%%%%%%%%%%%%%%%%%%%%%%%%%%%%%%%%%%%%%%%%%%%%%%%%%%%%%%%%%%%%%%%%%%%%%%%%
%%%%%%%%%%%%%%%%%%%%%%%%%%%%%%%%%%%%%%%%%%%%%%%%%%%%%%%%%%%%%%%%%%%%%%%%%%%%%%%%

We study the statistical capacity of the classical binary perceptrons with general thresholds $\kappa$. After recognizing the connection between the capacity and the bilinearly indexed (bli) random processes, we utilize a recent progress in studying such processes to characterize the capacity. In particular, we rely on \emph{fully lifted} random duality theory (fl RDT) established in \cite{Stojnicflrdt23} to create a general framework for studying the perceptrons' capacities. Successful underlying numerical evaluations are required for the framework (and ultimately the entire fl RDT machinery) to become fully practically operational. We present results obtained in that directions and uncover that the capacity characterizations are achieved on the second (first non-trivial) level of \emph{stationarized} full lifting. The obtained results \emph{exactly} match the replica symmetry breaking predictions obtained through statistical physics replica methods in \cite{KraMez89}. Most notably, for the famous zero-threshold scenario, $\kappa=0$, we uncover the well known $\alpha\approx0.8330786$ scaled capacity.

\vspace*{0.25in} \noindent {\bf Index Terms: Binary perceptrons; Fully lifted random duality theory}.

\end{singlespace}

%%%%%%%%%%%%%%%%%%%%%%%%%%%%%%%%%%%%%%%%%%%%%%%%%%%%%%%%%%%%%%%%%
%%%%%%%%%%%%%%%%%%%%%%%%%%%%%%%%%%%%%%%%%%%%%%%%%%%%%%%%%%%%%%%%%
%%%%%%%%%%%%%%%%%%%%%%%%%%%%%%%%%%%%%%%%%%%%%%%%%%%%%%%%%%%%%%%%%
%%%%%%%%%%%%%%%%%%%%%%%%%%%%%%%%%%%%%%%%%%%%%%%%%%%%%%%%%%%%%%%%%
%%%%%%%%%%%%%%%%%%%%%%%%%%%%%%%%%%%%%%%%%%%%%%%%%%%%%%%%%%%%%%%%%
%%%%%%%%%%%%%%%%%%%%%%%%%%%%%%%%%%%%%%%%%%%%%%%%%%%%%%%%%%%%%%%%%
\section{Introduction}
\label{sec:back}
%%%%%%%%%%%%%%%%%%%%%%%%%%%%%%%%%%%%%%%%%%%%%%%%%%%%%%%%%%%%%%%%%
%%%%%%%%%%%%%%%%%%%%%%%%%%%%%%%%%%%%%%%%%%%%%%%%%%%%%%%%%%%%%%%%%
%%%%%%%%%%%%%%%%%%%%%%%%%%%%%%%%%%%%%%%%%%%%%%%%%%%%%%%%%%%%%%%%%
%%%%%%%%%%%%%%%%%%%%%%%%%%%%%%%%%%%%%%%%%%%%%%%%%%%%%%%%%%%%%%%%%
%%%%%%%%%%%%%%%%%%%%%%%%%%%%%%%%%%%%%%%%%%%%%%%%%%%%%%%%%%%%%%%%%

After a prior, more than half of a century long, intensive studying, it is the last two decades in particular that brought an unprecedented interest to  neural networks (NN) and machine learning (ML). Various research directions within these fields have been of interest, including, development of efficient algorithmic techniques, recognition of potential applications/connections to different scientific/engineering areas, creation of analytical tools for performance characterization and many others. Along similar lines, our primary interest here is the analytical progress and, in particular, theoretical studying of \emph{perceptrons} -- the moat basic NN units and almost unavoidable tools in any ML classifying concepts.

Due to their, basically irreplaceable, fundamental nature, perceptrons have been studied in many different fields and the amount of obtained results is rather large. It remains though completely fascinating that, even after quite a long period of studying and a plethora of great results achieved, many of the main concepts remain, from the analytical point, somewhat mysterious. Before we get to discuss some of them, we find it useful to briefly focus on the most relevant ones that are mathematically well understood. 

As expected, we start with the most typical (and in some sense the simplest)  perceptrons' variants -- the so-called \emph{spherical} perceptrons. Perhaps surprisingly, even such simple NN units, are not easy to completely analytically characterize. For example, their classifying/storage \emph{capacity} (as one of their most relevant features) is often difficult to compute and even harder to algorithmically achieve. Such a difficulty was recognized from the very early NN days. Consequently, the very early analytical treatments focused on various special cases and started with seminal works \cite{Wendel62,Winder,Cover65} where the zero-threshold capacity was precisely determined through combinatorial considerations closely related to high dimensional geometry and polytopal  neighborliness (for relevant geometric followup extensions see, e.g., \cite{Stojnicl1BnBxasymldp,Stojnicl1BnBxfinn}).

A generic analytical treatment started a couple of decades later though. Moreover, since the generic (non-zero thresholds) perceptrons turned out to be much harder than the zero-threshold ones, the early considerations relied on statistical physics insights. Unquestionably of greatest importance in that direction is the Gardner's seminal work, \cite{Gar88}, which paved the way for many of the very best perceptrons' analytical results that to this day stand as absolute state of the art. Namely, \cite{Gar88} and a follow-up \cite{GarDer88}, recognized and reaffirmed the role of a \emph{statistical approach} and consequently successfully adapted the, already well established statistical physics, replica approach so that it can treat almost any feature of various perceptrons models.  These included accurate predictions/approximations for their storage capacities in several different scenarios: positive/negative thresholds, correlated/uncorrelated patterns, patterns stored incorrectly and many others. Almost all of the predictions obtained in \cite{Gar88,GarDer88} were later on (in identical or similar statistical contexts) established rigorously as mathematical facts (see, e.g., \cite{SchTir02,SchTir03,Tal05,Talbook11a,Talbook11b,StojnicGardGen13,StojnicGardSphNeg13,StojnicGardSphErr13}). These typically related to the above mentioned spherical perceptrons. In particular, in \cite{SchTir02,SchTir03}, the authors confirmed the predictions made in \cite{Gar88} related to the storage capacity of the \emph{positive} spherical perceptrons. Moreover, the prediction related to the volume of the bond strengths that satisfies the perceptron dynamics was also confirmed as fully correct. Later on, in \cite{Tal05,Talbook11a,Talbook11b} Talagrand reconfirmed these predictions through a related but somewhat different approach. In \cite{StojnicGardGen13}, a completely different, random duality theory (RDT) based,  framework was introduced and, again, used to confirm almost all of the predictions from \cite{Gar88}, including many previously not considered in \cite{SchTir02,SchTir03,Tal05,Talbook11a,Talbook11b}. An underlying \emph{convexity} provided a substantial help in all of these, mathematically rigorous, treatments.

%%%%%%%%%%%%%%%%%%%%%%%%%%%%%%%%%%%%%%%%%%%%%%%%%%%%%%%%%%%%%%%%%%%%%%%%%%%%%%%%%%%%%%%%%%%%%%%%%%%%%%%%%%%%%
%%%%%%%%%%%%%%%%%%%%%%%%%%%%%%%%%%%%%%%%%%%%%%%%%%%%%%%%%%%%%%%%%%%%%%%%%%%%%%%%%%%%%%%%%%%%%%%%%%%%%%%%%%%%%
%%%%%%%%%%%%%%%%%%%%%%%%%%%%%%%%%%%%%%%%%%%%%%%%%%%%%%%%%%%%%%%%%%%%%%%%%%%%%%%%%%%%%%%%%%%%%%%%%%%%%%%%%%%%%
%%%%%%%%%%%%%%%%%%%%%%%%%%%%%%%%%%%%%%%%%%%%%%%%%%%%%%%%%%%%%%%%%%%%%%%%%%%%%%%%%%%%%%%%%%%%%%%%%%%%%%%%%%%%%
%%%%%%%%%%%%%%%%%%%%%%%%%%%%%%%%%%%%%%%%%%%%%%%%%%%%%%%%%%%%%%%%%%%%%%%%%%%%%%%%%%%%%%%%%%%%%%%%%%%%%%%%%%%%%
\subsection{Hard cases --- deviations from strong deterministic duality and convexity}
\label{sec:devsddconv}
%%%%%%%%%%%%%%%%%%%%%%%%%%%%%%%%%%%%%%%%%%%%%%%%%%%%%%%%%%%%%%%%%%%%%%%%%%%%%%%%%%%%%%%%%%%%%%%%%%%%%%%%%%%%%
%%%%%%%%%%%%%%%%%%%%%%%%%%%%%%%%%%%%%%%%%%%%%%%%%%%%%%%%%%%%%%%%%%%%%%%%%%%%%%%%%%%%%%%%%%%%%%%%%%%%%%%%%%%%%
%%%%%%%%%%%%%%%%%%%%%%%%%%%%%%%%%%%%%%%%%%%%%%%%%%%%%%%%%%%%%%%%%%%%%%%%%%%%%%%%%%%%%%%%%%%%%%%%%%%%%%%%%%%%%
%%%%%%%%%%%%%%%%%%%%%%%%%%%%%%%%%%%%%%%%%%%%%%%%%%%%%%%%%%%%%%%%%%%%%%%%%%%%%%%%%%%%%%%%%%%%%%%%%%%%%%%%%%%%%
%%%%%%%%%%%%%%%%%%%%%%%%%%%%%%%%%%%%%%%%%%%%%%%%%%%%%%%%%%%%%%%%%%%%%%%%%%%%%%%%%%%%%%%%%%%%%%%%%%%%%%%%%%%%%

As one starts facing scenarios where the strong deterministic duality does not hold, the above mathematically rigorous confirmations may become less effective. However, their power is still significant and, in almost all known instances, hard to beat. A prototypical such a scenario happens when the above positive spherical perceptrons transform into the \emph{negative} counterparts. This happens as soon as the perceptrons' thresholds move from $\kappa\geq 0$ to $\kappa<0$. The underlying deterministic strong duality is lost and obtaining accurate capacity characterizations becomes notoriously hard. Still, quite a lot of progress has been made when facing scenarios like this. In particular, \cite{StojnicGardSphNeg13}  ensured that the RDT power remains useful and proved the Talagrand's conjecture from \cite{Tal05,Talbook11a,Talbook11b} that the \cite{Gar88}'s results  related to the \emph{negative} spherical perceptrons are also rigorous upper bounds. Utilizing a \emph{lifted} RDT approach,  \cite{StojnicGardSphNeg13} then continued even further and established that, in certain range of problem parameters, these rigorous bounds can even be lowered (for more on the ensuing, algorithmic and replica based negative spherical perceptrons results, see excellent references \cite{FPSUZ17,FraHwaUrb19,FraPar16,FraSclUrb19,FraSclUrb20,AlaSel20}). Along the same lines, \cite{StojnicGardSphErr13} attacked spherical perceptrons when functioning as erroneous storage memories and again showed that the replica symmetry based predictions of \cite{Gar88} are rigorous upper bounds  which in certain range of system parameters can be lowered. Relying on the lifted RDT, both \cite{StojnicGardSphNeg13,StojnicGardSphErr13} effectively showed that, in the considered problems, the replica symmetry (assumed in \cite{Gar88,GarDer88}) must be broken.

Many other, so to say, ``\emph{hard}'' types of perceptrons have been intensively studied over the last several decades as well. Among the most challenging ones turned out to be the \emph{discrete} perceptrons. An introductory replica based treatment of a subclass of such perceptrons, the so-called binary $\pm 1$ perceptrons, was already given in foundational papers  \cite{Gar88,GarDer88} where it was promptly recognized  that handling  such perceptrons may be substantially harder than handling  the above discussed spherical ones. More specifically, the initial $\frac{4}{\pi}$ capacity prediction  in the simplest, zero-threshold ($\kappa=0$), case indicated that, for $\pm 1$ perceptrons, the framework of \cite{Gar88} may not be able to match even the simple combinatorics based alternative capacity characterizations that gave $1$ as an upper bound. Many other problems remained open as well. For example, while it was obvious that the capacity prediction of $\frac{4}{\pi}$ for  $\kappa=0$ is an upper bound, it was not clear if such a prediction can be safely made for all $\kappa$. Later on, \cite{StojnicDiscPercp13} considered general $\kappa$ thresholds and proved all key binary perceptrons replica predictions of \cite{Gar88,GarDer88}  as mathematically rigorous bounds.

As a result it was hinted that a more advanced version of the framework from \cite{Gar88} may be needed. Many great works followed attempting to resolve the problem. A couple of them relied on breaking the replica symmetry as the first next natural extension of \cite{Gar88,GarDer88}. The first study in this direction appeared in \cite{KraMez89} where, the now famous, zero-threshold ($\kappa=0$) capacity estimate $\approx 0.833$ was given (\cite{KraMez89} obtained estimates for any $\kappa$; it is, however, that $\approx 0.833$ is usually the single number most often associated with the binary perceptrons capacity). Similar arguments were then repeated in \cite{GutSte90} for $\pm 1$ perceptron and extended to $0/1$ and a few other discrete perceptrons  (several of the replica predictions of \cite{GutSte90} were, later on, shown  in \cite{StojnicDiscPercp13} as rigorous bounds or even fully accurate). Besides general $\kappa$ thresholds results of \cite{StojnicDiscPercp13}, the rigorous results also include, the zero-thresholds lower and upper bounds, $0.005$ and $0.9937$, of \cite{KimRoc98} (for alternative small lower bounds see also, e.g., \cite{BoltNakSunXu22,Tal99a}). Finally, the ``\emph{small probability}'' lower bound of \cite{DingSun19} (which was, later on, turned into a ``\emph{high probability}'' lower bound in \cite{NakSun23}) matches the $\approx 0.833$ prediction of \cite{KraMez89} (while the results of \cite{DingSun19,NakSun23,BoltNakSunXu22,Tal99a} were obtained for zero-thresholds $\kappa=0$, many of the underlying considerations hold for a general $\kappa$).

Many other great works are closely related as well and study various different relevant aspects. We mention some of them. In \cite{CXu21}, the existence of a capacity as a so-called sharp threshold was proven in the  Bernoulli models (this was extended to generic sub-gaussian ones in \cite{NakSun23}). Also, various different classes of binary perceptrons are of interest as well. Particularly related are the so-called \emph{symmetric binary} perceptrons. These problems are a bit easier and their capacity has been fully characterized (see, e.g., \cite{AbbLiSly21b,PerkXu21} as well as \cite{AbbLiSly21a,AlwLiuSaw21,AubPerZde19,GamKizPerXu22}).

We here first recognize the connection between the statistical perceptron problems and, in general, the so-called \emph{random feasibility problems} (rfps) on the one side and the \emph{random duality theory} (RDT) \cite{StojnicCSetam09,StojnicICASSP10var,StojnicRegRndDlt10,StojnicGardGen13,StojnicICASSP09} concepts on the other side. We then connect studying rfps to studying \emph{bilinearly indexed} (bli) random processes. Utilizing a recent progress made in studying bli processes in \cite{Stojnicsflgscompyx23,Stojnicnflgscompyx23}, in \cite{Stojnicflrdt23} a \emph{fully lifted} random duality theory (fl RDT) was established. Relying on the fl RDT and its a particular \emph{stationarized} fl RDT variant (called sfl RDT), we here obtain capacity characterizations. They become practically operational after underlying numerical evaluations are conducted. As a results of such evaluations, we find that on the second level of full lifting (2-sfl RDT), the obtained characterizations fully match the predictions of \cite{KraMez89} for any $\kappa$. In particular, for $\kappa=0$, we uncover the famous $\approx 0.8330786$ capacity result as well.

%%%%%%%%%%%%%%%%%%%%%%%%%%%%%%%%%%%%%%%%%%%%%%%%%%%%%%%%%%%%%%%%%%%%%%%%%%%%%%%%%%%%%%%%%%%%%%%%%%%%%%%%%%%%%%%%%%%%%%%
%%%%%%%%%%%%%%%%%%%%%%%%%%%%%%%%%%%%%%%%%%%%%%%%%%%%%%%%%%%%%%%%%%%%%%%%%%%%%%%%%%%%%%%%%%%%%%%%%%%%%%%%%%%%%%%%%%%%%%%
%%%%%%%%%%%%%%%%%%%%%%%%%%%%%%%%%%%%%%%%%%%%%%%%%%%%%%%%%%%%%%%%%%%%%%%%%%%%%%%%%%%%%%%%%%%%%%%%%%%%%%%%%%%%%%%%%%%%%%%
%%%%%%%%%%%%%%%%%%%%%%%%%%%%%%%%%%%%%%%%%%%%%%%%%%%%%%%%%%%%%%%%%%%%%%%%%%%%%%%%%%%%%%%%%%%%%%%%%%%%%%%%%%%%%%%%%%%%%%%
%%%%%%%%%%%%%%%%%%%%%%%%%%%%%%%%%%%%%%%%%%%%%%%%%%%%%%%%%%%%%%%%%%%%%%%%%%%%%%%%%%%%%%%%%%%%%%%%%%%%%%%%%%%%%%%%%%%%%%%
%%%%%%%%%%%%%%%%%%%%%%%%%%%%%%%%%%%%%%%%%%%%%%%%%%%%%%%%%%%%%%%%%%%%%%%%%%%%%%%%%%%%%%%%%%%%%%%%%%%%%%%%%%%%%%%%%%%%%%%
\section{Binary perceptrons as random feasibility problems (rfps)}
 \label{sec:bprfps}
%%%%%%%%%%%%%%%%%%%%%%%%%%%%%%%%%%%%%%%%%%%%%%%%%%%%%%%%%%%%%%%%%%%%%%%%%%%%%%%%%%%%%%%%%%%%%%%%%%%%%%%%%%%%%%%%%%%%%%%
%%%%%%%%%%%%%%%%%%%%%%%%%%%%%%%%%%%%%%%%%%%%%%%%%%%%%%%%%%%%%%%%%%%%%%%%%%%%%%%%%%%%%%%%%%%%%%%%%%%%%%%%%%%%%%%%%%%%%%%
%%%%%%%%%%%%%%%%%%%%%%%%%%%%%%%%%%%%%%%%%%%%%%%%%%%%%%%%%%%%%%%%%%%%%%%%%%%%%%%%%%%%%%%%%%%%%%%%%%%%%%%%%%%%%%%%%%%%%%%
%%%%%%%%%%%%%%%%%%%%%%%%%%%%%%%%%%%%%%%%%%%%%%%%%%%%%%%%%%%%%%%%%%%%%%%%%%%%%%%%%%%%%%%%%%%%%%%%%%%%%%%%%%%%%%%%%%%%%%%
%%%%%%%%%%%%%%%%%%%%%%%%%%%%%%%%%%%%%%%%%%%%%%%%%%%%%%%%%%%%%%%%%%%%%%%%%%%%%%%%%%%%%%%%%%%%%%%%%%%%%%%%%%%%%%%%%%%%%%%
%%%%%%%%%%%%%%%%%%%%%%%%%%%%%%%%%%%%%%%%%%%%%%%%%%%%%%%%%%%%%%%%%%%%%%%%%%%%%%%%%%%%%%%%%%%%%%%%%%%%%%%%%%%%%%%%%%%%%%%

We start with the following standard mathematical formulation of the \emph{feasibility} problems with linear inequalities
\begin{eqnarray}
\hspace{-1.5in}\mbox{Feasibility problem $\mathbf{\mathcal F}(G,\b,\cX,\alpha)$:} \hspace{1in}\mbox{find} & & \x\nonumber \\
\mbox{subject to}
& & G\x\geq \b \nonumber \\
& & \x\in\cX, \label{eq:ex1}
\end{eqnarray}
where $G\in\mR^{n\times n}$, $\b\in\mR^{m\times 1}$, $\cX\in\mR^n$, and $\alpha=\frac{m}{n}$. As recognized in \cite{StojnicGardGen13,StojnicGardSphErr13,StojnicGardSphNeg13,StojnicDiscPercp13}, the above formulation is precisely the same one that describes the simplest perceptrons. The perceptron's type is determined based on matrix $G$, vector $\b$, and set $\cX$. In particular, for
$\cX=\{-\frac{1}{\sqrt{n}},\frac{1}{\sqrt{n}} \}^b$ (i.e., for $\cX$ being the corners of the $n$-dimensional unit norm hypercube), we obtain the so-called binary $\pm 1$ perceptron with variable thresholds. For $\b=\kappa\1$, where $\kappa\in\mR$ and $\1$ (here and throughout the paper)  is a vector of all ones of appropriate dimensions, one further obtains the binary $\pm 1$ perceptron with \emph{fixed}, $\kappa$, threshold. Moreover, for a generic $G$, we have a deterministic perceptron whereas for a random $G$ a statistical one. Throughout the paper, we consider the so called Gaussian perceptrons where components of $G$ are i.i.d. standard normal random variables. This makes the presentation much neater and easier to follow. We however, mention that all of our results can easily be adapted to fit, in place of the standard normal, pretty much any other distribution that can be pushed through the Lindenberg variant of the central limit theorem.

One  can rewrite the feasibility problem from (\ref{eq:ex1}) as the following optimization problem
\begin{eqnarray}
\min_{\x} & & f(\x) \nonumber \\
\mbox{subject to}
& & G\x\geq \b \nonumber \\
& & \x\in\cX, \label{eq:ex1a1}
\end{eqnarray}
where an artificial function $f(\x):\mR^n \rightarrow\mR$ is introduced. Clearly, the above problem will be solvable only if it is actually feasible. Let us for a moment pretend that this is indeed the case. Then (\ref{eq:ex1a1}) can be rewritten as
\begin{eqnarray}
\xi_{feas}^{(0)}(f,\cX) = \min_{\x\in\cX} \max_{\y\in\cY_+}  \lp f(\x) -\y^T G\x +\y^T  \b  \rp,
 \label{eq:ex2}
\end{eqnarray}
where set $\cY_+$ is a collection of $\y$ such that $\y_{i}\geq 0,1\leq i\leq m$. Now, specializing back to $f(\x)=0$, one finds
\begin{eqnarray}
\xi_{feas}^{(0)}(0,\cX) = \min_{\x\in\cX} \max_{\y\in\cY_+}  \lp -\y^T G\x +\y^T  \b  \rp,
 \label{eq:ex2a1}
\end{eqnarray}
and observes that if there is an $\x$ such that $G\x\geq \b$, i.e., such that (\ref{eq:ex1}) is feasible, then the best that the inner maximization can do is make $\xi_{feas}^{(0)}(0,\cX) =0$. On the other hand, if there is no such an $\x$, then at least one of the inequalities in $G\x\geq \b$ is not satisfied and the inner maximization can trivially make $\xi_{feas}^{(0)}(0,\cX) =\infty$. It is not that difficult to see that from the feasibility point of view, $\xi_{feas}^{(0)}(0,\cX) =\infty$ and $\xi_{feas}^{(0)}(0,\cX) >0$ are equivalent (i.e., the problem is structurally insensitive with respect to $\y$ scaling). One can therefore restrict to $\|\y\|_2=1$ and ensure that $\xi_{feas}^{(0)}(0,\cX)$ remains bounded. From (\ref{eq:ex2a1}), it then easily follows that determining
\begin{eqnarray}
\xi_{feas}(0,\cX)
& =  &
\min_{\x\in\cX} \max_{\y\in\cY_+,\|\y\|_2=1}   \lp -\y^TG\x +\y^T\b \rp
                   =
\min_{\x\in\{-\frac{1}{\sqrt{n}},\frac{1}{\sqrt{n}}\}} \max_{\y\in\mS_+^m}  \lp -\y^TG\x + \kappa \y^T\1 \rp,
 \label{eq:ex3}
\end{eqnarray}
with $\mS_+^m$ being the positive orthant part of the $m$-dimensional unit sphere,
is of critical importance in characterizing the rfps from (\ref{eq:ex1}). Clearly, and as recognized in \cite{StojnicGardGen13,StojnicGardSphErr13,StojnicGardSphNeg13,StojnicDiscPercp13}, the positivity and non-positivity of the objective value in (\ref{eq:ex3}) (i.e., of $\xi_{feas}(f,\cX)$) ensure the infeasibility and feasibility of (\ref{eq:ex1}), respectively. As random \emph{feasibility} problems (rfps) (for example, the one from (\ref{eq:ex1})) are directly connected to perceptrons, handling their random \emph{optimization} problems counterparts (for example, the one from (\ref{eq:ex3})) is then of critical importance in determining various perceptrons' features. Of our interest here is, of course, the  \emph{capacity} of the perceptrons when used as storage memories or classifiers. In a large dimensional statistical context, such a capacity is defined as follows
 \begin{eqnarray}
\alpha & = &    \lim_{n\rightarrow \infty} \frac{m}{n}  \nonumber \\
\alpha_c(\kappa) & \triangleq & \max \{\alpha |\hspace{.08in}  \lim_{n\rightarrow\infty}\mP_G\lp\xi_{perc}(0,\cX)\triangleq \xi_{feas}(0,\cX)>0\rp\longrightarrow 1\} \nonumber \\
& = & \max \{\alpha |\hspace{.08in}  \lim_{n\rightarrow\infty}\mP_G\lp{\mathcal F}(G,\b,\cX,\alpha) \hspace{.07in}\mbox{is feasible} \rp\longrightarrow 1\}.
  \label{eq:ex4}
\end{eqnarray}
For the completeness, we also mention that the corresponding deterministic capacity variant is defined exactly as above with $\mP_G$ removed. Throughout the paper, the subscript next to $\mP$ (and later on $\mE$) denotes the randomness with respect to which the statistical evaluation is taken. On occasion, when this is clear from the contexts, the subscripts are left out.

%%%%%%%%%%%%%%%%%%%%%%%%%%%%%%%%%%%%%%%%%%%%%%%%%%%%%%%%%%%%%%%%%%%%%%%%%%%%%%%%%%%%%%%%%%%%%%%%%%%%%%%%%%%%%%
%%%%%%%%%%%%%%%%%%%%%%%%%%%%%%%%%%%%%%%%%%%%%%%%%%%%%%%%%%%%%%%%%%%%%%%%%%%%%%%%%%%%%%%%%%%%%%%%%%%%%%%%%%%%%%
%%%%%%%%%%%%%%%%%%%%%%%%%%%%%%%%%%%%%%%%%%%%%%%%%%%%%%%%%%%%%%%%%%%%%%%%%%%%%%%%%%%%%%%%%%%%%%%%%%%%%%%%%%%%%%
%%%%%%%%%%%%%%%%%%%%%%%%%%%%%%%%%%%%%%%%%%%%%%%%%%%%%%%%%%%%%%%%%%%%%%%%%%%%%%%%%%%%%%%%%%%%%%%%%%%%%%%%%%%%%%
\subsection{Random feasibility problems as free energy instances}
\label{secrfpsfe}
%%%%%%%%%%%%%%%%%%%%%%%%%%%%%%%%%%%%%%%%%%%%%%%%%%%%%%%%%%%%%%%%%%%%%%%%%%%%%%%%%%%%%%%%%%%%%%%%%%%%%%%%%%%%%%
%%%%%%%%%%%%%%%%%%%%%%%%%%%%%%%%%%%%%%%%%%%%%%%%%%%%%%%%%%%%%%%%%%%%%%%%%%%%%%%%%%%%%%%%%%%%%%%%%%%%%%%%%%%%%%
%%%%%%%%%%%%%%%%%%%%%%%%%%%%%%%%%%%%%%%%%%%%%%%%%%%%%%%%%%%%%%%%%%%%%%%%%%%%%%%%%%%%%%%%%%%%%%%%%%%%%%%%%%%%%%
%%%%%%%%%%%%%%%%%%%%%%%%%%%%%%%%%%%%%%%%%%%%%%%%%%%%%%%%%%%%%%%%%%%%%%%%%%%%%%%%%%%%%%%%%%%%%%%%%%%%%%%%%%%%%%

We start by introducing the so-called Hamiltonian
\begin{equation}
\cH_{sq}(G)= \y^TG\x,\label{eq:ham1}
\end{equation}
and the corresponding partition function
\begin{equation}
Z_{sq}(\beta,G)=\sum_{\x\in\cX} \lp \sum_{\y\in\cY}e^{\beta\cH_{sq}(G)}\rp^{-1},  \label{eq:partfun}
\end{equation}
where, for the overall generality, we take $\cX$ and $\cY$ as general sets (later on, we specialize to particular sets of our interest,  $\cX=\{-\frac{1}{\sqrt{n}},\frac{1}{\sqrt{n}}\}^n$ and $\cY=\mS_+^m$). The corresponding variant of the thermodynamic limit (average ``\emph{reciprocal}'') free energy is then
\begin{eqnarray}
f_{sq}(\beta) & = & \lim_{n\rightarrow\infty}\frac{\mE_G\log{(Z_{sq}(\beta,G)})}{\beta \sqrt{n}}
=\lim_{n\rightarrow\infty} \frac{\mE_G\log\lp \sum_{\x\in\cX} \lp \sum_{\y\in\cY}e^{\beta\cH_{sq}(G)}\rp^{-1}\rp}{\beta \sqrt{n}} \nonumber \\
& = &\lim_{n\rightarrow\infty} \frac{\mE_G\log\lp \sum_{\x\in\cX} \lp \sum_{\y\in\cY}e^{\beta\y^TG\x)}\rp^{-1}\rp}{\beta \sqrt{n}},\label{eq:logpartfunsqrt}
\end{eqnarray}
and its corresponding ground state special case
\begin{eqnarray}
f_{sq}(\infty)   \triangleq    \lim_{\beta\rightarrow\infty}f_{sq}(\beta) & = &
\lim_{\beta,n\rightarrow\infty}\frac{\mE_G\log{(Z_{sq}(\beta,G)})}{\beta \sqrt{n}}
=
 \lim_{n\rightarrow\infty}\frac{\mE_G \max_{\x\in\cX}  -  \max_{\y\in\cY} \y^TG\x}{\sqrt{n}} \nonumber \\
& = & - \lim_{n\rightarrow\infty}\frac{\mE_G \min_{\x\in\cX}  \max_{\y\in\cY} \y^TG\x}{\sqrt{n}}.
  \label{eq:limlogpartfunsqrta0}
\end{eqnarray}
Assuming that the components of $G$ are i.i.d. standard normals and given their sign symmetry we also have
\begin{eqnarray}
-f_{sq}(\infty)
& = &  \lim_{n\rightarrow\infty}\frac{\mE_G \min_{\x\in\cX}  \max_{\y\in\cY} \y^TG\x}{\sqrt{n}}  = \lim_{n\rightarrow\infty}\frac{\mE_G \min_{\x\in\cX}  \max_{\y\in\cY} -\y^TG\x}{\sqrt{n}}.
  \label{eq:limlogpartfunsqrt}
\end{eqnarray}
One now observes that $f_{sq}(\infty)$ is very tightly connected to $\xi_{feas}(0,\cX)$ given in (\ref{eq:ex3}). Understanding  $f_{sq}(\infty)$ is therefore critically important. However, studying $f_{sq}(\infty)$ directly is usually very hard. Instead, we rely on studying $f_{sq}(\beta)$, i.e., on studying the free energy defined for a general $\beta$ . The results of such a generic studying are eventually specialized to the so-called ground state behavior, $\beta\rightarrow\infty$. Consequently, the results  presented below easily account for any $\beta$. However, in the interest of easing the exposition,  some terms of no significance in the ground state regime are, on occasion, neglected.

%%%%%%%%%%%%%%%%%%%%%%%%%%%%%%%%%%%%%%%%%%%%%%%%%%%%%%%%%%%%%%%%%
%%%%%%%%%%%%%%%%%%%%%%%%%%%%%%%%%%%%%%%%%%%%%%%%%%%%%%%%%%%%%%%%%
%%%%%%%%%%%%%%%%%%%%%%%%%%%%%%%%%%%%%%%%%%%%%%%%%%%%%%%%%%%%%%%%%
%%%%%%%%%%%%%%%%%%%%%%%%%%%%%%%%%%%%%%%%%%%%%%%%%%%%%%%%%%%%%%%%%
%%%%%%%%%%%%%%%%%%%%%%%%%%%%%%%%%%%%%%%%%%%%%%%%%%%%%%%%%%%%%%%%%
%%%%%%%%%%%%%%%%%%%%%%%%%%%%%%%%%%%%%%%%%%%%%%%%%%%%%%%%%%%%%%%%%
\section{Fitting binary perceptrons into sfl RDT}
\label{sec:randlincons}
%%%%%%%%%%%%%%%%%%%%%%%%%%%%%%%%%%%%%%%%%%%%%%%%%%%%%%%%%%%%%%%%%
%%%%%%%%%%%%%%%%%%%%%%%%%%%%%%%%%%%%%%%%%%%%%%%%%%%%%%%%%%%%%%%%%
%%%%%%%%%%%%%%%%%%%%%%%%%%%%%%%%%%%%%%%%%%%%%%%%%%%%%%%%%%%%%%%%%
%%%%%%%%%%%%%%%%%%%%%%%%%%%%%%%%%%%%%%%%%%%%%%%%%%%%%%%%%%%%%%%%%
%%%%%%%%%%%%%%%%%%%%%%%%%%%%%%%%%%%%%%%%%%%%%%%%%%%%%%%%%%%%%%%%%
%%%%%%%%%%%%%%%%%%%%%%%%%%%%%%%%%%%%%%%%%%%%%%%%%%%%%%%%%%%%%%%%%

One first straightforwardly observes that the free energy from (\ref{eq:logpartfunsqrt}),
\begin{eqnarray}
f_{sq}(\beta) & = &\lim_{n\rightarrow\infty} \frac{\mE_G\log\lp \sum_{\x\in\cX} \lp \sum_{\y\in\cY}e^{\beta\y^TG\x)}\rp^{-1}\rp}{\beta \sqrt{n}},\label{eq:hmsfl1}
\end{eqnarray}
is a function of bli random process $\y^TG\x$. To establish a connection between $f_{sq}$ and the bli related results of \cite{Stojnicsflgscompyx23,Stojnicnflgscompyx23,Stojnicflrdt23}, we follow \cite{Stojnichopflrdt23}. To that end, we start with a few technical definitions. For $r\in\mN$, $k\in\{1,2,\dots,r+1\}$, real scalars $s$, $x$, and $y$  such that $s^2=1$, $x>0$, and $y>0$, sets $\cX\subseteq \mR^n$ and $\cY\subseteq \mR^m$, function $f_S(\cdot):\mR^n\rightarrow R$, vectors $\p=[\p_0,\p_1,\dots,\p_{r+1}]$, $\q=[\q_0,\q_1,\dots,\q_{r+1}]$, and $\c=[\c_0,\c_1,\dots,\c_{r+1}]$ such that
 \begin{eqnarray}\label{eq:hmsfl2}
1=\p_0\geq \p_1\geq \p_2\geq \dots \geq \p_r\geq \p_{r+1} & = & 0 \nonumber \\
1=\q_0\geq \q_1\geq \q_2\geq \dots \geq \q_r\geq \q_{r+1} & = &  0,
 \end{eqnarray}
$\c_0=1$, $\c_{r+1}=0$, and ${\mathcal U}_k\triangleq [u^{(4,k)},\u^{(2,k)},\h^{(k)}]$  such that the components of  $u^{(4,k)}\in\mR$, $\u^{(2,k)}\in\mR^m$, and $\h^{(k)}\in\mR^n$ are i.i.d. standard normals, we set
  \begin{eqnarray}\label{eq:fl4}
\psi_{S,\infty}(f_{S},\calX,\calY,\p,\q,\c,x,y,s)  =
 \mE_{G,{\mathcal U}_{r+1}} \frac{1}{n\c_r} \log
\lp \mE_{{\mathcal U}_{r}} \lp \dots \lp \mE_{{\mathcal U}_3}\lp\lp\mE_{{\mathcal U}_2} \lp \lp Z_{S,\infty}\rp^{\c_2}\rp\rp^{\frac{\c_3}{\c_2}}\rp\rp^{\frac{\c_4}{\c_3}} \dots \rp^{\frac{\c_{r}}{\c_{r-1}}}\rp, \nonumber \\
 \end{eqnarray}
where
\begin{eqnarray}\label{eq:fl5}
Z_{S,\infty} & \triangleq & e^{D_{0,S,\infty}} \nonumber \\
 D_{0,S,\infty} & \triangleq  & \max_{\x\in\cX,\|\x\|_2=x} s \max_{\y\in\cY,\|\y\|_2=y}
 \lp \sqrt{n} f_{S}
+\sqrt{n}  y    \lp\sum_{k=2}^{r+1}c_k\h^{(k)}\rp^T\x
+ \sqrt{n} x \y^T\lp\sum_{k=2}^{r+1}b_k\u^{(2,k)}\rp \rp \nonumber  \\
 b_k & \triangleq & b_k(\p,\q)=\sqrt{\p_{k-1}-\p_k} \nonumber \\
c_k & \triangleq & c_k(\p,\q)=\sqrt{\q_{k-1}-\q_k}.
 \end{eqnarray}
With all the above definitions in place, we can then recall on the following theorem -- certainly one of fundamental components of sfl RDT.
\begin{theorem} \cite{Stojnicflrdt23}
\label{thm:thmsflrdt1}  Consider large $n$ context with  $\alpha=\lim_{n\rightarrow\infty} \frac{m}{n}$, remaining constant as  $n$ grows. Let the elements of  $G\in\mR^{m\times n}$
 be i.i.d. standard normals and let $\cX\subseteq \mR^n$ and $\cY\subseteq \mR^m$ be two given sets. Assume the complete sfl RDT frame from \cite{Stojnicsflgscompyx23} and consider a given function $f(\y):R^m\rightarrow R$. Set
\begin{align}\label{eq:thmsflrdt2eq1}
   \psi_{rp} & \triangleq - \max_{\x\in\cX} s \max_{\y\in\cY} \lp f(\y)+\y^TG\x \rp
   \qquad  \mbox{(\bl{\textbf{random primal}})} \nonumber \\
   \psi_{rd}(\p,\q,\c,x,y,s) & \triangleq    \frac{x^2y^2}{2}    \sum_{k=2}^{r+1}\Bigg(\Bigg.
   \p_{k-1}\q_{k-1}
   -\p_{k}\q_{k}
  \Bigg.\Bigg)
\c_k
  - \psi_{S,\infty}(f(\y),\calX,\calY,\p,\q,\c,x,y,s) \hspace{.03in} \mbox{(\bl{\textbf{fl random dual}})}. \nonumber \\
 \end{align}
Let $\hat{\p_0}\rightarrow 1$, $\hat{\q_0}\rightarrow 1$, and $\hat{\c_0}\rightarrow 1$, $\hat{\p}_{r+1}=\hat{\q}_{r+1}=\hat{\c}_{r+1}=0$, and let the non-fixed parts of $\hat{\p}\triangleq \hat{\p}(x,y)$, $\hat{\q}\triangleq \hat{\q}(x,y)$, and  $\hat{\c}\triangleq \hat{\c}(x,y)$ be the solutions of the following system
\begin{eqnarray}\label{eq:thmsflrdt2eq2}
   \frac{d \psi_{rd}(\p,\q,\c,x,y,s)}{d\p} =  0,\quad
   \frac{d \psi_{rd}(\p,\q,\c,x,y,s)}{d\q} =  0,\quad
   \frac{d \psi_{rd}(\p,\q,\c,x,y,s)}{d\c} =  0.
 \end{eqnarray}
 Then,
\begin{eqnarray}\label{eq:thmsflrdt2eq3}
    \lim_{n\rightarrow\infty} \frac{\mE_G  \psi_{rp}}{\sqrt{n}}
  & = &
\min_{x>0} \max_{y>0} \lim_{n\rightarrow\infty} \psi_{rd}(\hat{\p}(x,y),\hat{\q}(x,y),\hat{\c}(x,y),x,y,s) \qquad \mbox{(\bl{\textbf{strong sfl random duality}})},\nonumber \\
 \end{eqnarray}
where $\psi_{S,\infty}(\cdot)$ is as in (\ref{eq:fl4})-(\ref{eq:fl5}).
 \end{theorem}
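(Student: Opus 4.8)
The statement is quoted as a theorem from \cite{Stojnicflrdt23}, so the plan is not to re-derive it from scratch but to indicate how it is obtained within the sfl RDT machinery of \cite{Stojnicsflgscompyx23,Stojnicnflgscompyx23} and how the binary-perceptron Hamiltonian $\cH_{sq}(G)=\y^TG\x$ is seen to be a special instance of the bilinearly indexed (bli) process handled there. First I would recognize that the quantity $\psi_{rp}=-\max_{\x\in\cX}s\max_{\y\in\cY}(f(\y)+\y^TG\x)$ is, up to the normalization $1/\sqrt{n}$, exactly the ground-state ($\beta\to\infty$) free energy of a bli process of the form treated by the fl RDT: the interaction term $\y^TG\x$ is bilinear in the two index vectors $\x\in\cX$ and $\y\in\cY$, with $G$ Gaussian. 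Thus Theorem~\ref{thm:thmsflrdt1} is the $\beta\to\infty$ specialization of the general sfl RDT comparison result applied to this process, and the proof amounts to (i) invoking that general result to produce the lifted/dual expression, and (ii) verifying the stationarity (self-consistency) conditions that turn the generic lifting parameters into the hatted ones.

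The key steps, in order, would be as follows. Step one: introduce the interpolating (Gaussian comparison) family linking the original process $\y^TG\x$ to a decoupled surrogate built from the independent Gaussian blocks ${\mathcal U}_k=[u^{(4,k)},\u^{(2,k)},\h^{(k)}]$, with the weights $b_k=\sqrt{\p_{k-1}-\p_k}$, $c_k=\sqrt{\q_{k-1}-\q_k}$ encoding an $r$-level hierarchical (Parisi-type) covariance structure; this is precisely the construction underlying $D_{0,S,\infty}$ and $Z_{S,\infty}$ in (\ref{eq:fl5}). Step two: apply the nested/recursive free-energy identity of \cite{Stojnicsflgscompyx23} — averaging level by level with the exponents $\c_k/\c_{k-1}$ — to obtain $\psi_{S,\infty}$ as in (\ref{eq:fl4}), and combine it with the quadratic "correction" term $\frac{x^2y^2}{2}\sum_{k=2}^{r+1}(\p_{k-1}\q_{k-1}-\p_k\q_k)\c_k$ that accounts for the change of covariance along the interpolation; this yields $\psi_{rd}(\p,\q,\c,x,y,s)$. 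Step three: show that along the interpolation path the derivative in the interpolation parameter has a sign that, after the stationarization, vanishes — i.e. that the bound becomes an equality — which is exactly the content of imposing (\ref{eq:thmsflrdt2eq2}); the optimality in the "geometric" parameters is captured by the outer $\min_{x>0}\max_{y>0}$. Step four: pass to the thermodynamic limit $n\to\infty$, using concentration of $\frac{1}{\sqrt n}\log Z$ (Gaussian Poincaré/Lipschitz concentration) so that the $\mE_G$ in $\psi_{rp}$ and the limit commute, and the ground-state limit $\beta\to\infty$ turns $\frac1\beta\log\sum e^{\beta(\cdot)}$ into the corresponding $\max$.

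The main obstacle — and the genuinely nontrivial part borrowed from \cite{Stojnicflrdt23,Stojnicsflgscompyx23} — is establishing that \emph{stationarity upgrades the inequality to an equality}. A plain Gaussian interpolation (Slepian/Gordon-type) argument only gives one-sided comparison $\lim \mE_G\psi_{rp}/\sqrt n \le$ (or $\ge$) the dual expression for \emph{every} admissible $(\p,\q,\c)$; obtaining the reverse inequality at the critical (stationary) point requires the full lifted structure — the decreasing chains (\ref{eq:hmsfl2}) and the self-consistency system (\ref{eq:thmsflrdt2eq2}) — together with the monotonicity of $\psi_{rd}$ in the lifting level that forces the extremum to be attained exactly where the interpolation derivative changes sign. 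Verifying that the system (\ref{eq:thmsflrdt2eq2}) indeed admits such a solution $(\hat\p,\hat\q,\hat\c)$ with the correct boundary values $\hat\p_0,\hat\q_0,\hat\c_0\to1$, $\hat\p_{r+1}=\hat\q_{r+1}=\hat\c_{r+1}=0$, and that the resulting value is simultaneously an upper and a lower bound, is where all the weight of the argument sits; everything else (the nested averaging, the concentration, the $\beta\to\infty$ reduction) is routine once that core comparison-with-equality statement is in hand.
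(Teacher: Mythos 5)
Your proposal takes essentially the same route as the paper: the paper's own proof is a one-line deferral to \cite{Stojnicflrdt23}, noting only the cosmetic change $f(\x)\rightarrow f(\y)$ and that the $s=1$ case follows by a line-by-line repetition of the $s=-1$ argument, and your sketch of the underlying interpolation/stationarization machinery is a faithful (if more detailed) account of what that cited argument does. The only small omission is that you do not flag the $s=\pm 1$ adaptation explicitly, but this is exactly the ``trivial adjustment'' the paper itself dismisses.
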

\begin{proof}
The $s=-1$ scenario follows directly from the corresponding one proven in \cite{Stojnicflrdt23} after a cosmetic change $f(\x)\rightarrow f(\y)$. On the other hand, the $s=1$ scenario, follows after trivial adjustments and a line-by-line repetition of the arguments of Section 3 of \cite{Stojnicflrdt23} with $s=-1$ replaced by $s=1$ and $f(\x)$ replaced by $f(\y)$.
 \end{proof}

 The above theorem is a generic result that relates to any given sets $\cX$ and $\cY$. The following corollary makes it operational for the case of binary $\pm 1$ perceptrons of interest here.
\begin{corollary}
\label{cor:cor1}  Assume the setup of Theorem \ref{thm:thmsflrdt1}. Let $\cX\subseteq \mR^n$ and $\cY\subseteq \mR^m$ be comprised of unit norm elements. Set
\begin{align}\label{eq:thmsflrdt2eq1a0}
   \psi_{rp} & \triangleq - \max_{\x\in\cX} s \max_{\y\in\cY} \lp \y^TG\x + \kappa \y^T\1 \rp
   \qquad  \mbox{(\bl{\textbf{random primal}})} \nonumber \\
   \psi_{rd}(\p,\q,\c,x,y,s) & \triangleq    \frac{1}{2}    \sum_{k=2}^{r+1}\Bigg(\Bigg.
   \p_{k-1}\q_{k-1}
   -\p_{k}\q_{k}
  \Bigg.\Bigg)
\c_k
  - \psi_{S,\infty}(\kappa\y^T\1,\calX,\calY,\p,\q,\c,1,1,s) \quad \mbox{(\bl{\textbf{fl random dual}})}. \nonumber \\
 \end{align}
Let the non-fixed parts of $\hat{\p}$, $\hat{\q}$, and  $\hat{\c}$ be the solutions of the following system
\begin{eqnarray}\label{eq:thmsflrdt2eq2a0}
   \frac{d \psi_{rd}(\p,\q,\c,1,1,s)}{d\p} =  0,\quad
   \frac{d \psi_{rd}(\p,\q,\c,1,1,s)}{d\q} =  0,\quad
   \frac{d \psi_{rd}(\p,\q,\c,1,1,s)}{d\c} =  0.
 \end{eqnarray}
 Then,
\begin{eqnarray}\label{eq:thmsflrdt2eq3a0}
    \lim_{n\rightarrow\infty} \frac{\mE_G  \psi_{rp}}{\sqrt{n}}
  & = &
 \lim_{n\rightarrow\infty} \psi_{rd}(\hat{\p},\hat{\q},\hat{\c},1,1,s) \qquad \mbox{(\bl{\textbf{strong sfl random duality}})},\nonumber \\
 \end{eqnarray}
where $\psi_{S,\infty}(\cdot)$ is as in (\ref{eq:fl4})-(\ref{eq:fl5}).
 \end{corollary}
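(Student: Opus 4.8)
The plan is to derive Corollary~\ref{cor:cor1} as a direct specialization of Theorem~\ref{thm:thmsflrdt1}, so essentially all the work is bookkeeping: matching the abstract objects $f(\y)$, $\cX$, $\cY$, $x$, $y$ to the concrete ones appearing in the binary perceptron. First I would set $f(\y)\triangleq \kappa\y^T\1$, which is the linear term carrying the fixed threshold $\kappa$; with this choice the random primal $\psi_{rp}$ of the theorem becomes exactly $-\max_{\x\in\cX} s\max_{\y\in\cY}\lp \y^TG\x+\kappa\y^T\1\rp$, which is the stated random primal of the corollary. Next I would invoke the hypothesis that $\cX$ and $\cY$ consist of unit-norm elements: since every $\x\in\cX$ has $\|\x\|_2=1$ and every $\y\in\cY$ has $\|\y\|_2=1$, the constraints $\|\x\|_2=x$ and $\|\y\|_2=y$ inside $D_{0,S,\infty}$ are automatically met precisely when $x=y=1$, so the only relevant point of the outer $\min_{x>0}\max_{y>0}$ in \eqref{eq:thmsflrdt2eq3} is $(x,y)=(1,1)$. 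Substituting $x=y=1$ collapses the prefactor $\frac{x^2y^2}{2}$ in $\psi_{rd}$ to $\frac12$ and turns $\psi_{S,\infty}(f(\y),\calX,\calY,\p,\q,\c,x,y,s)$ into $\psi_{S,\infty}(\kappa\y^T\1,\calX,\calY,\p,\q,\c,1,1,s)$, reproducing the fl random dual of the corollary verbatim.

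With the objects identified, the remaining steps are: (i) note that the stationarity system \eqref{eq:thmsflrdt2eq2} evaluated at $x=y=1$ is literally \eqref{eq:thmsflrdt2eq2a0}, so $\hat{\p},\hat{\q},\hat{\c}$ of the corollary are the same critical-point quantities produced by the theorem (now without residual $(x,y)$-dependence, since $x,y$ are pinned); and (ii) observe that because the feasible set of the $\min_{x>0}\max_{y>0}$ is the single point $(1,1)$, the right-hand side of \eqref{eq:thmsflrdt2eq3} reduces to $\lim_{n\rightarrow\infty}\psi_{rd}(\hat{\p},\hat{\q},\hat{\c},1,1,s)$, which is exactly the claimed identity \eqref{eq:thmsflrdt2eq3a0}. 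I would also remark that the corollary's hypotheses match those of the theorem (i.i.d.\ standard normal $G$, fixed aspect ratio $\alpha$, the complete sfl RDT frame from \cite{Stojnicsflgscompyx23}), so nothing extra is needed to apply it. For the binary $\pm1$ perceptron one takes $\cX=\{-\frac{1}{\sqrt n},\frac{1}{\sqrt n}\}^n$ and $\cY=\mS_+^m$, which are indeed unit-norm sets, making the corollary directly applicable; but I would phrase the proof for any unit-norm $\cX,\cY$ exactly as stated.

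The only genuinely delicate point — and the one I would flag as the main obstacle — is justifying that restricting the norms to $x=y=1$ is not merely a feasibility restriction on the primal side but is legitimately compatible with the $\min_{x>0}\max_{y>0}$ optimization on the dual side of \eqref{eq:thmsflrdt2eq3}. One has to be careful that $\psi_{S,\infty}$ as defined in \eqref{eq:fl4}--\eqref{eq:fl5} already incorporates the constraints $\|\x\|_2=x$, $\|\y\|_2=y$ inside $D_{0,S,\infty}$, so for unit-norm $\cX,\cY$ the expression $\psi_{S,\infty}(\cdot,x,y,\cdot)$ is genuinely $+\infty$ (or the corresponding maximization is over an empty set) unless $x=y=1$; hence the $\min_x\max_y$ is vacuously attained at $(1,1)$ and there is no loss in dropping it. I would spell this out in one or two sentences rather than leaving it implicit. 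Everything else is substitution, and I would keep the proof to a short paragraph invoking Theorem~\ref{thm:thmsflrdt1} with $f(\y)=\kappa\y^T\1$ and $x=y=1$, noting the collapse of the norm optimization and of the $\frac{x^2y^2}{2}$ factor, and concluding that \eqref{eq:thmsflrdt2eq3a0} is \eqref{eq:thmsflrdt2eq3} read at that point.
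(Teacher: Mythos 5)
Your proposal is correct and follows essentially the same route as the paper, which proves the corollary in one line by taking $f(\y)=\kappa\y^T\1$ in Theorem \ref{thm:thmsflrdt1} and noting that the unit-norm hypothesis on $\cX$ and $\cY$ pins $x=y=1$. Your additional remark about why the $\min_{x>0}\max_{y>0}$ collapses is a reasonable elaboration of what the paper leaves implicit, but it does not change the argument.
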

\begin{proof}
Follows as a trivial consequence of Theorem \ref{thm:thmsflrdt1}, after taking $f(\y)=\kappa\y^T\1$ and recognizing that each element in $\cX$ and $\cY$ has unit norm.
 \end{proof}

As noted in \cite{Stojnicflrdt23,Stojnichopflrdt23}, various corresponding probabilistic variants of (\ref{eq:thmsflrdt2eq3}) and (\ref{eq:thmsflrdt2eq3a0}) follow through the above random primal problems' trivial concentrations. We skip stating these trivialities.

%%%%%%%%%%%%%%%%%%%%%%%%%%%%%%%%%%%%%%%%%%%%%%%%%%%%%%%%%%%%%%%%%%%%%%%%%%%%%%%%
%%%%%%%%%%%%%%%%%%%%%%%%%%%%%%%%%%%%%%%%%%%%%%%%%%%%%%%%%%%%%%%%%%%%%%%%%%%%%%%%
%%%%%%%%%%%%%%%%%%%%%%%%%%%%%%%%%%%%%%%%%%%%%%%%%%%%%%%%%%%%%%%%%%%%%%%%%%%%%%%%
%%%%%%%%%%%%%%%%%%%%%%%%%%%%%%%%%%%%%%%%%%%%%%%%%%%%%%%%%%%%%%%%%%%%%%%%%%%%%%%%
%%%%%%%%%%%%%%%%%%%%%%%%%%%%%%%%%%%%%%%%%%%%%%%%%%%%%%%%%%%%%%%%%%%%%%%%%%%%%%%%
\section{Practical utilization}
\label{sec:prac}
%%%%%%%%%%%%%%%%%%%%%%%%%%%%%%%%%%%%%%%%%%%%%%%%%%%%%%%%%%%%%%%%%%%%%%%%%%%%%%%%
%%%%%%%%%%%%%%%%%%%%%%%%%%%%%%%%%%%%%%%%%%%%%%%%%%%%%%%%%%%%%%%%%%%%%%%%%%%%%%%%
%%%%%%%%%%%%%%%%%%%%%%%%%%%%%%%%%%%%%%%%%%%%%%%%%%%%%%%%%%%%%%%%%%%%%%%%%%%%%%%%
%%%%%%%%%%%%%%%%%%%%%%%%%%%%%%%%%%%%%%%%%%%%%%%%%%%%%%%%%%%%%%%%%%%%%%%%%%%%%%%%
%%%%%%%%%%%%%%%%%%%%%%%%%%%%%%%%%%%%%%%%%%%%%%%%%%%%%%%%%%%%%%%%%%%%%%%%%%%%%%%%

Very elegant mathematical forms of Theorem \ref{thm:thmsflrdt1} and Corollary \ref{cor:cor1} are of practical use only if one can evaluate all the underlying quantities. As usual, two key obstacles might appear when trying to do so: (i) A lack of, a priori available, clarity as to what should be the correct value for $r$; and (ii) Set $\cY$ does not have a component-wise structure characterization which may render the decoupling over $\y$ as not necessarily overly straightforward. It turns out, however, that neither of these two obstacles poses a serious problem.

Specializing to $\cX=\{-\frac{1}{\sqrt{n}},\frac{1}{\sqrt{n}}\}^n$ and $\cY=\mS_+^m$ and relying on the results of Corollary \ref{cor:cor1}, we start by observing that the key object of practical interest is the so-called \emph{random dual}
\begin{align}\label{eq:prac1}
    \psi_{rd}(\p,\q,\c,1,1,s) & \triangleq    \frac{1}{2}    \sum_{k=2}^{r+1}\Bigg(\Bigg.
   \p_{k-1}\q_{k-1}
   -\p_{k}\q_{k}
  \Bigg.\Bigg)
\c_k
  - \psi_{S,\infty}(0,\calX,\calY,\p,\q,\c,1,1,s). \nonumber \\
  & =   \frac{1}{2}    \sum_{k=2}^{r+1}\Bigg(\Bigg.
   \p_{k-1}\q_{k-1}
   -\p_{k}\q_{k}
  \Bigg.\Bigg)
\c_k
  - \frac{1}{n}\varphi(D^{(bin)}(s)) - \frac{1}{n}\varphi(D^{(sph)}(s)), \nonumber \\
  \end{align}
where analogously to (\ref{eq:fl4})-(\ref{eq:fl5})
  \begin{eqnarray}\label{eq:prac2}
\varphi(D,\c) & = &
 \mE_{G,{\mathcal U}_{r+1}} \frac{1}{\c_r} \log
\lp \mE_{{\mathcal U}_{r}} \lp \dots \lp \mE_{{\mathcal U}_3}\lp\lp\mE_{{\mathcal U}_2} \lp
\lp    e^{D}   \rp^{\c_2}\rp\rp^{\frac{\c_3}{\c_2}}\rp\rp^{\frac{\c_4}{\c_3}} \dots \rp^{\frac{\c_{r}}{\c_{r-1}}}\rp, \nonumber \\
  \end{eqnarray}
and
\begin{eqnarray}\label{eq:prac3}
D^{(bin)}(s) & = & \max_{\x\in\cX} \lp   s\sqrt{n}      \lp\sum_{k=2}^{r+1}c_k\h^{(k)}\rp^T\x  \rp \nonumber \\
  D^{(sph)}(s) & \triangleq  &   s \max_{\y\in\cY}
\lp \sqrt{n} \kappa \y^T\1 + \sqrt{n}  \y^T\lp\sum_{k=2}^{r+1}b_k\u^{(2,k)}\rp \rp.
 \end{eqnarray}
A simple evaluation gives
\begin{eqnarray}\label{eq:prac4}
D^{(bin)}(s) & = & \max_{\x\in\cX}   \lp s\sqrt{n}      \lp\sum_{k=2}^{r+1}c_k\h^{(k)}\rp^T\x \rp =
       \sum_{i=1}^n \left |s\lp\sum_{k=2}^{r+1}c_k\h_i^{(k)}\rp \right |
=     \sum_{i=1}^n \left |\lp\sum_{k=2}^{r+1}c_k\h_i^{(k)}\rp \right |
=  \sum_{i=1}^n D^{(bin)}_i, \nonumber \\
 \end{eqnarray}
where
\begin{eqnarray}\label{eq:prac5}
D^{(bin)}_i(c_k)=\left |\lp\sum_{k=2}^{r+1}c_k\h_i^{(k)}\rp \right |.
\end{eqnarray}
Consequently,
  \begin{eqnarray}\label{eq:prac6}
\varphi(D^{(bin)}(s),\c) & = &
n \mE_{G,{\mathcal U}_{r+1}} \frac{1}{\c_r} \log
\lp \mE_{{\mathcal U}_{r}} \lp \dots \lp \mE_{{\mathcal U}_3}\lp\lp\mE_{{\mathcal U}_2} \lp
    e^{\c_2D_1^{(bin)}}  \rp\rp^{\frac{\c_3}{\c_2}}\rp\rp^{\frac{\c_4}{\c_3}} \dots \rp^{\frac{\c_{r}}{\c_{r-1}}}\rp
    = n\varphi(D_1^{(bin)}). \nonumber \\
   \end{eqnarray}
In a similar fashion, we also find
\begin{eqnarray}\label{eq:prac7}
   D^{(sph)}(s) & \triangleq  &   s \sqrt{n}  \max_{\y\in\cY}
\lp  \kappa \y^T\1 +   \y^T\lp\sum_{k=2}^{r+1}b_k\u^{(2,k)}\rp \rp
=  s  \sqrt{n}   \left \| \max \lp \kappa\1 +\sum_{k=2}^{r+1}b_k\u^{(2,k)},0 \rp  \right \|_2.
 \end{eqnarray}
Similarly to what was done in \cite{Stojnichopflrdt23},  we now utilize the  \emph{square root trick} introduced on numerous occasions in \cite{StojnicMoreSophHopBnds10,StojnicLiftStrSec13,StojnicGardSphErr13,StojnicGardSphNeg13}
\begin{align}\label{eq:prac8}
   D^{(sph)} (s)
& =   \sqrt{n}  s \left \| \max \lp \kappa\1 + \sqrt{n}\sum_{k=2}^{r+1}b_k\u^{(2,k)},0 \rp  \right \|_2
=  s\sqrt{n}  \min_{\gamma} \lp \frac{\left \| \max \lp \kappa\1 +  \sum_{k=2}^{r+1}b_k\u^{(2,k)},0 \rp  \right \|_2^2}{4\gamma}+\gamma \rp \nonumber \\
 & =   s\sqrt{n}  \min_{\gamma} \lp \frac{\sum_{i=1}^{m}  \max \lp \kappa +  \sum_{k=2}^{r+1}b_k\u_i^{(2,k)},0 \rp ^2}{4\gamma}+\gamma \rp.
 \end{align}
After introducing scaling $\gamma=\gamma_{sq}\sqrt{n}$, (\ref{eq:prac8}) can be rewritten as
\begin{eqnarray}\label{eq:prac9}
   D^{(sph)}(s)
  & =  & s\sqrt{n}  \min_{\gamma_{sq}} \lp \frac{\sum_{i=1}^{m} \max \lp \kappa + \sum_{k=2}^{r+1}b_k\u_i^{(2,k)},0 \rp^2}{4\gamma_{sq}\sqrt{n}}+\gamma_{sq}\sqrt{n} \rp   \nonumber \\
  & = & s \min_{\gamma_{sq}} \lp \frac{\sum_{i=1}^{m} \max \lp \kappa + \sum_{k=2}^{r+1}b_k\u_i^{(2,k)},0  \rp^2}{4\gamma_{sq}}+\gamma_{sq}n \rp \nonumber \\
  & =  &  s \min_{\gamma_{sq}} \lp \sum_{i=1}^{m} D_i^{(sph)}(b_k)+\gamma_{sq}n \rp, \nonumber \\
 \end{eqnarray}
with
\begin{eqnarray}\label{eq:prac10}
   D_i^{(sph)}(b_k)= \frac{\max \lp \kappa + \sum_{k=2}^{r+1}b_k\u_i^{(2,k)},0  \rp^2}{4\gamma_{sq}}.
 \end{eqnarray}

%%%%%%%%%%%%%%%%%%%%%%%%%%%%%%%%%%%%%%%%%%%%%%%%%%%%%%%%%%%%%%%%%%%%%%%%%%%%%%%%%%%%%%%%%%%%%%%%%
%%%%%%%%%%%%%%%%%%%%%%%%%%%%%%%%%%%%%%%%%%%%%%%%%%%%%%%%%%%%%%%%%%%%%%%%%%%%%%%%%%%%%%%%%%%%%%%%%
%%%%%%%%%%%%%%%%%%%%%%%%%%%%%%%%%%%%%%%%%%%%%%%%%%%%%%%%%%%%%%%%%%%%%%%%%%%%%%%%%%%%%%%%%%%%%%%%%
%%%%%%%%%%%%%%%%%%%%%%%%%%%%%%%%%%%%%%%%%%%%%%%%%%%%%%%%%%%%%%%%%%%%%%%%%%%%%%%%%%%%%%%%%%%%%%%%%
%%%%%%%%%%%%%%%%%%%%%%%%%%%%%%%%%%%%%%%%%%%%%%%%%%%%%%%%%%%%%%%%%%%%%%%%%%%%%%%%%%%%%%%%%%%%%%%%%
\subsection{Specialization to $s=-1$}
\label{sec:neg}
%%%%%%%%%%%%%%%%%%%%%%%%%%%%%%%%%%%%%%%%%%%%%%%%%%%%%%%%%%%%%%%%%%%%%%%%%%%%%%%%%%%%%%%%%%%%%%%%%
%%%%%%%%%%%%%%%%%%%%%%%%%%%%%%%%%%%%%%%%%%%%%%%%%%%%%%%%%%%%%%%%%%%%%%%%%%%%%%%%%%%%%%%%%%%%%%%%%
%%%%%%%%%%%%%%%%%%%%%%%%%%%%%%%%%%%%%%%%%%%%%%%%%%%%%%%%%%%%%%%%%%%%%%%%%%%%%%%%%%%%%%%%%%%%%%%%%
%%%%%%%%%%%%%%%%%%%%%%%%%%%%%%%%%%%%%%%%%%%%%%%%%%%%%%%%%%%%%%%%%%%%%%%%%%%%%%%%%%%%%%%%%%%%%%%%%
%%%%%%%%%%%%%%%%%%%%%%%%%%%%%%%%%%%%%%%%%%%%%%%%%%%%%%%%%%%%%%%%%%%%%%%%%%%%%%%%%%%%%%%%%%%%%%%%%

Taking $s=-1$ enables the connection between the ground state energy, $f_{sq}$ given in (\ref{eq:limlogpartfunsqrt}), and the random primal of the above machinery, $\psi_{rp}(\cdot)$, given in Corollary \ref{cor:cor1}. In particular, we find
 \begin{eqnarray}
-f_{sq}(\infty)
 & = &
- \lim_{n\rightarrow\infty}\frac{\mE_G \max_{\x\in\cX}  -  \max_{\y\in\cY} \y^TG\x}{\sqrt{n}}
 =
    \lim_{n\rightarrow\infty} \frac{\mE_G  \psi_{rp}}{\sqrt{n}}
   =
 \lim_{n\rightarrow\infty} \psi_{rd}(\hat{\p},\hat{\q},\hat{\c},1,1,-1),
  \label{eq:negprac11}
\end{eqnarray}
where the non-fixed parts of $\hat{\p}$, $\hat{\q}$, and  $\hat{\c}$ are the solutions of the following system
\begin{eqnarray}\label{eq:negprac12}
   \frac{d \psi_{rd}(\p,\q,\c,1,1,-1)}{d\p} =  0,\quad
   \frac{d \psi_{rd}(\p,\q,\c,1,1,-1)}{d\q} =  0,\quad
   \frac{d \psi_{rd}(\p,\q,\c,1,1,-1)}{d\c} =  0.
 \end{eqnarray}
Relying on (\ref{eq:prac1})-(\ref{eq:prac10}), we also have
 \begin{eqnarray}
 \lim_{n\rightarrow\infty} \psi_{rd}(\hat{\p},\hat{\q},\hat{\c},1,1,-1) =  \bar{\psi}_{rd}(\hat{\p},\hat{\q},\hat{\c},\hat{\gamma}_{sq},1,1,-1),
  \label{eq:negprac12a}
\end{eqnarray}
where
\begin{eqnarray}\label{eq:negprac13}
    \bar{\psi}_{rd}(\p,\q,\c,\gamma_{sq},1,1,-1)   & = &  \frac{1}{2}    \sum_{k=2}^{r+1}\Bigg(\Bigg.
   \p_{k-1}\q_{k-1}
   -\p_{k}\q_{k}
  \Bigg.\Bigg)
\c_k
\nonumber \\
& &  - \varphi(D_1^{(bin)}(c_k(\p,\q)),\c) +\gamma_{sq}- \alpha\varphi(-D_1^{(sph)}(b_k(\p,\q)),\c).
  \end{eqnarray}
A combination of  (\ref{eq:negprac11}), (\ref{eq:negprac12a}), and (\ref{eq:negprac13}) then gives
 \begin{eqnarray}
-f_{sq}(\infty)
& = &  -\lim_{n\rightarrow\infty}\frac{\mE_G \max_{\x\in\cX}  -  \max_{\y\in\cY} \y^TG\x}{\sqrt{n}} \nonumber \\
    &  = &
 \lim_{n\rightarrow\infty} \psi_{rd}(\hat{\p},\hat{\q},\hat{\c},1,1,-1)
 =   \bar{\psi}_{rd}(\hat{\p},\hat{\q},\hat{\c},\hat{\gamma}_{sq},1,1,-1) \nonumber \\
 & = &   \frac{1}{2}    \sum_{k=2}^{r+1}\Bigg(\Bigg.
   \hat{\p}_{k-1}\hat{\q}_{k-1}
   -\hat{\p}_{k}\hat{\q}_{k}
  \Bigg.\Bigg)
\hat{\c}_k
  - \varphi(D_1^{(bin)}(c_k(\hat{\p},\hat{\q})),\c) + \hat{\gamma}_{sq} - \alpha\varphi(-D_1^{(sph)}(b_k(\hat{\p},\hat{\q})),\c). \nonumber \\
  \label{eq:negprac18}
\end{eqnarray}
The above is summarized in the following theorem.

\begin{theorem}
  \label{thme:negthmprac1}
  Assume the complete sfl RDT setup of \cite{Stojnicsflgscompyx23}. Consider $\varphi(\cdot)$ and $\bar{\psi}(\cdot)$ from (\ref{eq:prac2}) and (\ref{eq:negprac13}) and large $n$ linear regime with $\alpha=\lim_{n\rightarrow\infty} \frac{m}{n}$. Let the ``fixed'' parts of $\hat{\p}$, $\hat{\q}$, and $\hat{\c}$ satisfy $\hat{\p}_1\rightarrow 1$, $\hat{\q}_1\rightarrow 1$, $\hat{\c}_1\rightarrow 1$, $\hat{\p}_{r+1}=\hat{\q}_{r+1}=\hat{\c}_{r+1}=0$, and let the ``non-fixed'' parts of $\hat{\p}_k$, $\hat{\q}_k$, and $\hat{\c}_k$ ($k\in\{2,3,\dots,r\}$) be the solutions of the following system of equations
  \begin{eqnarray}\label{eq:negthmprac1eq1}
   \frac{d \bar{\psi}_{rd}(\p,\q,\c,\gamma_{sq},1,1,-1)}{d\p} =  0 \nonumber \\
   \frac{d \bar{\psi}_{rd}(\p,\q,\c,\gamma_{sq},1,1,-1)}{d\q} =  0 \nonumber \\
   \frac{d \bar{\psi}_{rd}(\p,\q,\c,\gamma_{sq},1,1,-1)}{d\c} =  0 \nonumber \\
   \frac{d \bar{\psi}_{rd}(\p,\q,\c,\gamma_{sq},1,1,-1)}{d\gamma_{sq}} =  0,
 \end{eqnarray}
 and, consequently, let
\begin{eqnarray}\label{eq:prac17}
c_k(\hat{\p},\hat{\q})  & = & \sqrt{\hat{\q}_{k-1}-\hat{\q}_k} \nonumber \\
b_k(\hat{\p},\hat{\q})  & = & \sqrt{\hat{\p}_{k-1}-\hat{\p}_k}.
 \end{eqnarray}
 Then
 \begin{eqnarray}
-f_{sq}(\infty)
& = &     \frac{1}{2}    \sum_{k=2}^{r+1}\Bigg(\Bigg.
   \hat{\p}_{k-1}\hat{\q}_{k-1}
   -\hat{\p}_{k}\hat{\q}_{k}
  \Bigg.\Bigg)
\hat{\c}_k
  - \varphi(D_1^{(bin)}(c_k(\hat{\p},\hat{\q})),\hat{\c}) + \hat{\gamma}_{sq} - \alpha\varphi(-D_1^{(sph)}(b_k(\hat{\p},\hat{\q})),\hat{\c}). \nonumber \\
  \label{eq:negthmprac1eq2}
\end{eqnarray}
\end{theorem}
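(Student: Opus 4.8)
The plan is to read off (\ref{eq:negthmprac1eq2}) from Corollary \ref{cor:cor1} by specializing to $s=-1$, $\cX=\{-\frac{1}{\sqrt n},\frac{1}{\sqrt n}\}^n$, $\cY=\mS_+^m$, and then evaluating the resulting \emph{fl random dual} $\psi_{rd}$ explicitly. The one genuinely nontrivial ingredient is the strong sfl random duality of Corollary \ref{cor:cor1} (hence Theorem \ref{thm:thmsflrdt1} and the fl RDT machinery underlying it); everything after that is an explicit, essentially mechanical, computation.

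\emph{Step 1 (ground state $\leftrightarrow$ random primal).} Using (\ref{eq:limlogpartfunsqrta0})--(\ref{eq:limlogpartfunsqrt}) and the sign symmetry of the i.i.d.\ standard normal $G$, I would first write $-f_{sq}(\infty)=\lim_{n\to\infty}\mE_G\big(\min_{\x\in\cX}\max_{\y\in\cY}\y^TG\x\big)/\sqrt n$, which for the scaled cube $\cX$ and $\cY=\mS_+^m$ is precisely $\lim_{n\to\infty}\mE_G\psi_{rp}/\sqrt n$ for the random primal of Corollary \ref{cor:cor1} taken at $s=-1$ (for $\kappa\neq 0$ the term $\kappa\y^T\1$ is kept; it only feeds the spherical block below, and the left-hand side is then the $\kappa$-thresholded ground state that governs feasibility of (\ref{eq:ex1}) through (\ref{eq:ex3})). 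Corollary \ref{cor:cor1} then gives immediately $-f_{sq}(\infty)=\lim_{n\to\infty}\psi_{rd}(\hat\p,\hat\q,\hat\c,1,1,-1)$, with $\hat\p,\hat\q,\hat\c$ the stationary point of (\ref{eq:thmsflrdt2eq2a0}).

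\emph{Step 2 (evaluate $\psi_{rd}$ for the binary/spherical sets).} The only part of $\psi_{rd}$ that needs work is $\psi_{S,\infty}$, i.e.\ $D_{0,S,\infty}$ of (\ref{eq:fl5}). Since $x=y=1$ and $f_S=\kappa\y^T\1$ depends on $\y$ only, the maximand in $D_{0,S,\infty}$ splits into an $\x$-only summand $\big(\sum_k c_k\h^{(k)}\big)^T\x$ and a $\y$-only summand $\kappa\y^T\1+\y^T\big(\sum_k b_k\u^{(2,k)}\big)$; hence $Z_{S,\infty}$ factorizes as $e^{D^{(bin)}(s)}e^{D^{(sph)}(s)}$, the nested log-expectation separates, and $\psi_{S,\infty}=\frac1n\varphi(D^{(bin)}(s),\c)+\frac1n\varphi(D^{(sph)}(s),\c)$, which is (\ref{eq:prac1}). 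For the cube $\cX$, maximizing a linear form over $\cX$ is coordinatewise and produces absolute values, so $D^{(bin)}(s)=\sum_{i=1}^n D_i^{(bin)}$ with $D_i^{(bin)}=|\sum_k c_k\h_i^{(k)}|$, (\ref{eq:prac4})--(\ref{eq:prac5}); since the $\h_i^{(k)}$ are independent across $i$, every layer of the nested expectation factorizes over $i$ and $\varphi(D^{(bin)}(s),\c)=n\varphi(D_1^{(bin)},\c)$, (\ref{eq:prac6}). For $\cY=\mS_+^m$, $\max_{\y\in\mS_+^m}\w^T\y=\|\max(\w,0)\|_2$, giving (\ref{eq:prac7}); the square root trick $\|u\|_2=\min_\gamma\big(\|u\|_2^2/(4\gamma)+\gamma\big)$ followed by the rescaling $\gamma=\gamma_{sq}\sqrt n$ turns the $\ell_2$ norm into a coordinate-separable sum, (\ref{eq:prac8})--(\ref{eq:prac10}), so $\varphi(D^{(sph)}(s),\c)$ reduces to $m$ copies of a single-coordinate term, and the factor $\alpha=\lim m/n$ together with the additive $\gamma_{sq}$ emerge, producing $\bar\psi_{rd}$ of (\ref{eq:negprac13}).

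\emph{Step 3 (assemble, and the one delicate point).} Substituting these evaluations into $\psi_{rd}(\hat\p,\hat\q,\hat\c,1,1,-1)$ and promoting the inner $\min$ over $\gamma$ to the extra stationarity equation $d\bar\psi_{rd}/d\gamma_{sq}=0$ replaces (\ref{eq:thmsflrdt2eq2a0}) by the augmented system (\ref{eq:negthmprac1eq1}); combining (\ref{eq:negprac11}) with these identities yields exactly (\ref{eq:negthmprac1eq2}). The only step requiring care beyond bookkeeping — and the one I expect to be the main obstacle — is the interchange of the $\min$ over $\gamma$ introduced by the square root trick with the nested expectations and logarithms defining $\varphi$: one must argue, exactly as in \cite{StojnicGardSphNeg13,StojnicGardSphErr13,Stojnichopflrdt23}, that in the $n\to\infty$ ground state regime this optimization concentrates and can be pulled outside the $\varphi$-functional, after which the $\gamma_{sq}$-stationarity in (\ref{eq:negthmprac1eq1}) is justified. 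The separation of $\psi_{S,\infty}$ into binary and spherical blocks and the coordinatewise reduction of $\varphi$ are then routine consequences of the product structure of $\cX$ and of the independence of the Gaussian vectors ${\mathcal U}_k$.
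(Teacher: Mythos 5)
Your proposal is correct and follows essentially the same route as the paper: the paper's proof of this theorem is just a pointer to "the previous discussion," which is exactly the chain you reconstruct — the $s=-1$ identification of $-f_{sq}(\infty)$ with $\lim \mE_G\psi_{rp}/\sqrt{n}$ via Corollary \ref{cor:cor1}, the binary/spherical factorization of $\psi_{S,\infty}$ in (\ref{eq:prac1}), the coordinatewise reduction (\ref{eq:prac4})--(\ref{eq:prac6}), and the square-root trick with the $\gamma=\gamma_{sq}\sqrt{n}$ rescaling in (\ref{eq:prac7})--(\ref{eq:prac10}) leading to $\bar{\psi}_{rd}$ in (\ref{eq:negprac13}). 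The interchange of the $\gamma$-minimization with the nested $\varphi$-functional that you flag as the delicate point is likewise deferred by the paper to the cited sfl RDT machinery rather than argued in situ.
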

\begin{proof}
Follows from the previous discussion, Theorem \ref{thm:thmsflrdt1}, Corollary \ref{cor:cor1}, and the sfl RDT machinery presented in \cite{Stojnicnflgscompyx23,Stojnicsflgscompyx23,Stojnicflrdt23,Stojnichopflrdt23}.
\end{proof}

%%%%%%%%%%%%%%%%%%%%%%%%%%%%%%%%%%%%%%%%%%%%%%%%%%%%%%%%%%%%%%%%%%%%%%%%%%%%%%%%%%%%%%%%%%%%%%%%%%%%%%%%%%%%%%%%%%%%%%%%%
%%%%%%%%%%%%%%%%%%%%%%%%%%%%%%%%%%%%%%%%%%%%%%%%%%%%%%%%%%%%%%%%%%%%%%%%%%%%%%%%%%%%%%%%%%%%%%%%%%%%%%%%%%%%%%%%%%%%%%%%%
%%%%%%%%%%%%%%%%%%%%%%%%%%%%%%%%%%%%%%%%%%%%%%%%%%%%%%%%%%%%%%%%%%%%%%%%%%%%%%%%%%%%%%%%%%%%%%%%%%%%%%%%%%%%%%%%%%%%%%%%%
%%%%%%%%%%%%%%%%%%%%%%%%%%%%%%%%%%%%%%%%%%%%%%%%%%%%%%%%%%%%%%%%%%%%%%%%%%%%%%%%%%%%%%%%%%%%%%%%%%%%%%%%%%%%%%%%%%%%%%%%%
%%%%%%%%%%%%%%%%%%%%%%%%%%%%%%%%%%%%%%%%%%%%%%%%%%%%%%%%%%%%%%%%%%%%%%%%%%%%%%%%%%%%%%%%%%%%%%%%%%%%%%%%%%%%%%%%%%%%%%%%%
\subsubsection{Numerical evaluations}
\label{sec:nuemrical}
%%%%%%%%%%%%%%%%%%%%%%%%%%%%%%%%%%%%%%%%%%%%%%%%%%%%%%%%%%%%%%%%%%%%%%%%%%%%%%%%%%%%%%%%%%%%%%%%%%%%%%%%%%%%%%%%%%%%%%%%%
%%%%%%%%%%%%%%%%%%%%%%%%%%%%%%%%%%%%%%%%%%%%%%%%%%%%%%%%%%%%%%%%%%%%%%%%%%%%%%%%%%%%%%%%%%%%%%%%%%%%%%%%%%%%%%%%%%%%%%%%%
%%%%%%%%%%%%%%%%%%%%%%%%%%%%%%%%%%%%%%%%%%%%%%%%%%%%%%%%%%%%%%%%%%%%%%%%%%%%%%%%%%%%%%%%%%%%%%%%%%%%%%%%%%%%%%%%%%%%%%%%%
%%%%%%%%%%%%%%%%%%%%%%%%%%%%%%%%%%%%%%%%%%%%%%%%%%%%%%%%%%%%%%%%%%%%%%%%%%%%%%%%%%%%%%%%%%%%%%%%%%%%%%%%%%%%%%%%%%%%%%%%%
%%%%%%%%%%%%%%%%%%%%%%%%%%%%%%%%%%%%%%%%%%%%%%%%%%%%%%%%%%%%%%%%%%%%%%%%%%%%%%%%%%%%%%%%%%%%%%%%%%%%%%%%%%%%%%%%%%%%%%%%%

The results of Theorem \ref{thme:negthmprac1} are sufficient to conduct numerical evaluations. Several analytical results can be obtained and we state them below as well. To ensure a systematic view regarding progressing of the whole lifting mechanism, we start the numerical evaluations with $r=1$ and proceed inductively. To enable concrete numerical values, we, on occasion specialize the evaluation to the most famous, zero-threshold, $\kappa=0$, case.

%%%%%%%%%%%%%%%%%%%%%%%%%%%%%%%%%%%%%%%%%%%%%%%%%%%%%%%%%%%%%%%%%%%%%%%%%%%%%%%%%%%%%%%%%%%%%%%%%%%%%%%%%%%%%%%%%%%%%%%%%
%%%%%%%%%%%%%%%%%%%%%%%%%%%%%%%%%%%%%%%%%%%%%%%%%%%%%%%%%%%%%%%%%%%%%%%%%%%%%%%%%%%%%%%%%%%%%%%%%%%%%%%%%%%%%%%%%%%%%%%%%
%%%%%%%%%%%%%%%%%%%%%%%%%%%%%%%%%%%%%%%%%%%%%%%%%%%%%%%%%%%%%%%%%%%%%%%%%%%%%%%%%%%%%%%%%%%%%%%%%%%%%%%%%%%%%%%%%%%%%%%%%
%%%%%%%%%%%%%%%%%%%%%%%%%%%%%%%%%%%%%%%%%%%%%%%%%%%%%%%%%%%%%%%%%%%%%%%%%%%%%%%%%%%%%%%%%%%%%%%%%%%%%%%%%%%%%%%%%%%%%%%%%
%%%%%%%%%%%%%%%%%%%%%%%%%%%%%%%%%%%%%%%%%%%%%%%%%%%%%%%%%%%%%%%%%%%%%%%%%%%%%%%%%%%%%%%%%%%%%%%%%%%%%%%%%%%%%%%%%%%%%%%%%
\subsubsubsection{$r=1$ -- first level of lifting}
\label{sec:firstlev}
%%%%%%%%%%%%%%%%%%%%%%%%%%%%%%%%%%%%%%%%%%%%%%%%%%%%%%%%%%%%%%%%%%%%%%%%%%%%%%%%%%%%%%%%%%%%%%%%%%%%%%%%%%%%%%%%%%%%%%%%%
%%%%%%%%%%%%%%%%%%%%%%%%%%%%%%%%%%%%%%%%%%%%%%%%%%%%%%%%%%%%%%%%%%%%%%%%%%%%%%%%%%%%%%%%%%%%%%%%%%%%%%%%%%%%%%%%%%%%%%%%%
%%%%%%%%%%%%%%%%%%%%%%%%%%%%%%%%%%%%%%%%%%%%%%%%%%%%%%%%%%%%%%%%%%%%%%%%%%%%%%%%%%%%%%%%%%%%%%%%%%%%%%%%%%%%%%%%%%%%%%%%%
%%%%%%%%%%%%%%%%%%%%%%%%%%%%%%%%%%%%%%%%%%%%%%%%%%%%%%%%%%%%%%%%%%%%%%%%%%%%%%%%%%%%%%%%%%%%%%%%%%%%%%%%%%%%%%%%%%%%%%%%%
%%%%%%%%%%%%%%%%%%%%%%%%%%%%%%%%%%%%%%%%%%%%%%%%%%%%%%%%%%%%%%%%%%%%%%%%%%%%%%%%%%%%%%%%%%%%%%%%%%%%%%%%%%%%%%%%%%%%%%%%%

For $r=1$ one has that $\hat{\p}_1\rightarrow 1$ and $\hat{\q}_1\rightarrow 1$ which together with $\hat{\p}_{r+1}=\hat{\p}_{2}=\hat{\q}_{r+1}=\hat{\q}_{2}=0$, and $\hat{\c}_{2}\rightarrow 0$ gives
\begin{align}\label{eq:negprac19}
    \bar{\psi}_{rd}(\hat{\p},\hat{\q},\hat{\c},\gamma_{sq},1,1,-1)   & =   \frac{1}{2}
\c_2
  - \frac{1}{\c_2}\log\lp \mE_{{\mathcal U}_2} e^{\c_2|\sqrt{1-0}\h_1^{(2)} |}\rp +\gamma_{sq}
- \alpha\frac{1}{\c_2}\log\lp \mE_{{\mathcal U}_2} e^{-\c_2\frac{\max(\kappa+\sqrt{1-0}\u_1^{(2,2)},0)^2}{4\gamma_{sq}}}\rp \nonumber \\
& \rightarrow
  - \frac{1}{\c_2}\log\lp 1+ \mE_{{\mathcal U}_2} \c_2|\sqrt{1-0}\h_1^{(2)} |\rp +\gamma_{sq} \nonumber \\
& \qquad - \alpha\frac{1}{\c_2}\log\lp 1- \mE_{{\mathcal U}_2} \c_2\frac{\max(\kappa+\sqrt{1-0}\u_1^{(2,2)},0)^2}{4\gamma_{sq}}\rp \nonumber \\
& \rightarrow
   - \frac{1}{\c_2}\log\lp 1+ \c_2\sqrt{\frac{2}{\pi}}\rp +\gamma_{sq} \nonumber \\
& \qquad - \alpha\frac{1}{\c_2}\log\lp 1- \frac{\c_2}{4\gamma_{sq}} \mE_{{\mathcal U}_2} \max(\kappa+\sqrt{1-0}\u_1^{(2,2)},0)^2 \rp \nonumber \\
& \rightarrow
  - \sqrt{\frac{2}{\pi}}+\gamma_{sq}
+  \frac{\alpha}{4\gamma_{sq}}\mE_{{\mathcal U}_2} \max(\kappa+\sqrt{1-0}\u_1^{(2,2)},0)^2.
  \end{align}
One then easily finds $\hat{\gamma}_{sq}=\frac{\sqrt{\alpha}}{2}\sqrt{\mE_{{\mathcal U}_2} \max(\kappa+\sqrt{1-0}\u_1^{(2,2)},0)^2}$ and
\begin{align}\label{eq:negprac20}
 - f_{sq}^{(1)}(\infty)=\bar{\psi}_{rd}(\hat{\p},\hat{\q},\hat{\c},\hat{\gamma}_{sq},1,1,-1)   & =
  - \sqrt{\frac{2}{\pi}}+\sqrt{\alpha}\sqrt{\mE_{{\mathcal U}_2} \max(\kappa+\u_1^{(2,2)},0)^2}.
  \end{align}
The condition $f_{sq}^{(1)}(\infty)=0$ gives the critical $\alpha_c^{(1)}$ as a function of $\kappa$
\begin{equation}\label{eq:negprac20a0}
a_c^{(1)}(\kappa)
=  \frac{2}{\pi\mE_{{\mathcal U}_2} \max(\kappa+\u_1^{(2,2)},0)^2}
=  \frac{2}{\pi\lp \frac{\kappa e^{-\frac{\kappa^2}{2}}}{\sqrt{2\pi}} + \frac{(\kappa^2+1)\erfc\lp -\frac{\kappa}{\sqrt{2}} \rp}{2}  \rp},
  \end{equation}
which for $\kappa=0$ becomes
\begin{equation}\label{eq:negprac21}
\hspace{-2in}(\mbox{first level:}) \qquad \qquad  a_c^{(1)}(0) =
  \frac{2}{\pi\mE_{{\mathcal U}_2} \max(\u_1^{(2,2)},0)^2} =  \frac{2}{\pi\frac{1}{2}} = \frac{4}{\pi}
\rightarrow  \bl{\mathbf{1.2732}}.
  \end{equation}

%%%%%%%%%%%%%%%%%%%%%%%%%%%%%%%%%%%%%%%%%%%%%%%%%%%%%%%%%%%%%%%%%%%%%%%%%%%%%%%%%%%%%%%%%%%%%%%%%%%%%%%%%%%%%%%%%%%%%%%%%
%%%%%%%%%%%%%%%%%%%%%%%%%%%%%%%%%%%%%%%%%%%%%%%%%%%%%%%%%%%%%%%%%%%%%%%%%%%%%%%%%%%%%%%%%%%%%%%%%%%%%%%%%%%%%%%%%%%%%%%%%
%%%%%%%%%%%%%%%%%%%%%%%%%%%%%%%%%%%%%%%%%%%%%%%%%%%%%%%%%%%%%%%%%%%%%%%%%%%%%%%%%%%%%%%%%%%%%%%%%%%%%%%%%%%%%%%%%%%%%%%%%
%%%%%%%%%%%%%%%%%%%%%%%%%%%%%%%%%%%%%%%%%%%%%%%%%%%%%%%%%%%%%%%%%%%%%%%%%%%%%%%%%%%%%%%%%%%%%%%%%%%%%%%%%%%%%%%%%%%%%%%%%
%%%%%%%%%%%%%%%%%%%%%%%%%%%%%%%%%%%%%%%%%%%%%%%%%%%%%%%%%%%%%%%%%%%%%%%%%%%%%%%%%%%%%%%%%%%%%%%%%%%%%%%%%%%%%%%%%%%%%%%%%
\subsubsubsection{$r=2$ -- second level of lifting}
\label{sec:secondlev}
%%%%%%%%%%%%%%%%%%%%%%%%%%%%%%%%%%%%%%%%%%%%%%%%%%%%%%%%%%%%%%%%%%%%%%%%%%%%%%%%%%%%%%%%%%%%%%%%%%%%%%%%%%%%%%%%%%%%%%%%%
%%%%%%%%%%%%%%%%%%%%%%%%%%%%%%%%%%%%%%%%%%%%%%%%%%%%%%%%%%%%%%%%%%%%%%%%%%%%%%%%%%%%%%%%%%%%%%%%%%%%%%%%%%%%%%%%%%%%%%%%%
%%%%%%%%%%%%%%%%%%%%%%%%%%%%%%%%%%%%%%%%%%%%%%%%%%%%%%%%%%%%%%%%%%%%%%%%%%%%%%%%%%%%%%%%%%%%%%%%%%%%%%%%%%%%%%%%%%%%%%%%%
%%%%%%%%%%%%%%%%%%%%%%%%%%%%%%%%%%%%%%%%%%%%%%%%%%%%%%%%%%%%%%%%%%%%%%%%%%%%%%%%%%%%%%%%%%%%%%%%%%%%%%%%%%%%%%%%%%%%%%%%%
%%%%%%%%%%%%%%%%%%%%%%%%%%%%%%%%%%%%%%%%%%%%%%%%%%%%%%%%%%%%%%%%%%%%%%%%%%%%%%%%%%%%%%%%%%%%%%%%%%%%%%%%%%%%%%%%%%%%%%%%%

We split the second level of lifting into two subparts: (i) \emph{partial} second level of lifting; and (ii) \emph{full} second level of lifting.

\underline{1) \textbf{\emph{Partial second level of lifting:}}}  For this part, we have, $r=2$,  $\hat{\p}_1\rightarrow 1$ and $\hat{\q}_1\rightarrow 1$, $\hat{\p}_{2}=\hat{\q}_{2}=0$, and $\hat{\p}_{r+1}=\hat{\p}_{3}=\hat{\q}_{r+1}=\hat{\q}_{3}=0$ but in general  $\hat{\c}_{2}\neq 0$. As above, one again has
\begin{align}\label{eq:negprac22}
    \bar{\psi}_{rd}(\hat{\p},\hat{\q},\c,\gamma_{sq},1,1,-1)   & =   \frac{1}{2}
\c_2
  - \frac{1}{\c_2}\log\lp \mE_{{\mathcal U}_2} e^{\c_2|\sqrt{1-0}\h_1^{(2)} |}\rp + \gamma_{sq}
- \alpha\frac{1}{\c_2}\log\lp \mE_{{\mathcal U}_2} e^{-\c_2\frac{\max(\kappa+\sqrt{1-0}\u_1^{(2,2)},0)^2}{4\gamma_{sq}}}\rp \nonumber \\
& =   \frac{1}{2}
\c_2
  - \frac{1}{\c_2}\log \lp e^{\frac{\c_2^2}{2}}\erfc\lp -\frac{\c_2}{\sqrt{2}}\rp \rp + \gamma_{sq}
- \alpha\frac{1}{\c_2}\log\lp \mE_{{\mathcal U}_2} e^{-\c_2\frac{\max(\kappa+\sqrt{1-0}\u_1^{(2,2)},0)^2}{4\gamma_{sq}}}\rp \nonumber \\
& =
  - \frac{1}{\c_2}\log \lp  \erfc\lp -\frac{\c_2}{\sqrt{2}}\rp \rp + \gamma_{sq}
- \alpha\frac{1}{\c_2}\log\lp \mE_{{\mathcal U}_2} e^{-\c_2\frac{\max(\kappa+\sqrt{1-0}\u_1^{(2,2)},0)^2}{4\gamma_{sq}}}\rp. \nonumber \\
   \end{align}
Solving the integrals gives
\begin{eqnarray}\label{eq:negprac22a0}
\bar{h} & = & -\kappa    \nonumber \\
\bar{B} & = & \frac{\c_2}{4\gamma_{sq}} \nonumber \\
\bar{C} & = & \kappa \nonumber \\
f_{(zd)}& = & \frac{e^{-\frac{\bar{B}\bar{C}^2}{2\bar{B} + 1}}}{2\sqrt{2\bar{B} + 1}}\erfc\lp\frac{\bar{h}}{\sqrt{4\bar{B} + 2}}\rp  \nonumber \\
f_{(zu)}& = & \frac{1}{2}\erfc\lp-\frac{\bar{h}}{\sqrt{2}}\rp,
   \end{eqnarray}
 and
\begin{equation}\label{eq:negprac22a1}
   \mE_{{\mathcal U}_2} e^{-\c_2\frac{\max(\kappa+\sqrt{1-0}\u_1^{(2,2)},0)^2}{4\gamma_{sq}}}=f_{(zd)} + f_{(zu)}.
   \end{equation}
After differentiating  (optimizing) with respect to $\gamma_{sq}$ and $\c_2$,  we distinguish two scenarios depending on the concrete value for $\kappa$.

\noindent (\textbf{\emph{i}}) For $\kappa\geq \kappa_c$ (where $\kappa_c$ will be specified below), $\hat{\c}_2\rightarrow 0$ and $\hat{\gamma}_{sq}=\frac{\sqrt{\alpha}}{2}\sqrt{\mE_{{\mathcal U}_2} \max(\kappa+\sqrt{1-0}\u_1^{(2,2)},0)^2}$, i.e., one uncovers the first level of lifting with $a_c^{(2,p)}$ as in (\ref{eq:negprac21}), i.e., with
\begin{equation}\label{eq:negprac22a1a0}
a_c^{(2,p)} =
a_c^{(1)} =
  \frac{2}{\pi\mE_{{\mathcal U}_2} \max(\kappa+\u_1^{(2,2)},0)^2}
  = \frac{2}{\pi\lp \frac{\kappa e^{-\frac{\kappa^2}{2}}}{\sqrt{2\pi}} + \frac{(\kappa^2+1)\erfc\lp -\frac{\kappa}{\sqrt{2}} \rp}{2}  \rp}.
  \end{equation}

\noindent (\textbf{\emph{ii}}) For $\kappa\leq \kappa_c$, $\hat{\c}_2\rightarrow \infty$ and $\hat{\gamma}_{sq}\rightarrow 0$. (\ref{eq:negprac22a0}) then becomes
\begin{eqnarray}\label{eq:negprac22a2}
\bar{h} & = & -\kappa    \nonumber \\
\bar{B} & = & \frac{\c_2}{4\gamma_{sq}} \rightarrow \infty \nonumber \\
\bar{C} & = & \kappa \nonumber \\
f_{(zd)}^{(2,p)}& = & \frac{e^{-\frac{\bar{B}\bar{C}^2}{2\bar{B} + 1}}}{2\sqrt{2\bar{B} + 1}}\erfc\lp\frac{\bar{h}}{\sqrt{4\bar{B} + 2}}\rp
\rightarrow 0 \nonumber \\
f_{(zu)}^{(2,p)}& = & \frac{1}{2}\erfc\lp-\frac{\bar{h}}{\sqrt{2}}\rp  \rightarrow \frac{1}{2}\erfc\lp \frac{\kappa}{\sqrt{2}}\rp,
   \end{eqnarray}
 which then transforms (\ref{eq:negprac22a1}) into
\begin{equation}\label{eq:negprac22a3}
   \mE_{{\mathcal U}_2} e^{-\c_2\frac{\max(\kappa+\sqrt{1-0}\u_1^{(2,2)},0)^2}{4\gamma_{sq}}}=f_{(zd)}^{(2,p)} + f_{(zu)}^{(2,p)} =
   \frac{1}{2}\erfc\lp \frac{\kappa}{\sqrt{2}}\rp.
   \end{equation}
Combining (\ref{eq:negprac22}) and (\ref{eq:negprac22a3}), we find
\begin{eqnarray}\label{eq:negprac22a4}
-f_{sq}^{(2,p)}(\infty)  & = &  \bar{\psi}_{rd}(\hat{\p},\hat{\q},\c,\gamma_{sq},1,1,-1)    \nonumber \\
& = &
  - \frac{1}{\c_2}\log \lp  \erfc\lp -\frac{\c_2}{\sqrt{2}}\rp \rp + \gamma_{sq}
- \alpha\frac{1}{\c_2}\log\lp \mE_{{\mathcal U}_2} e^{-\c_2\frac{\max(\kappa+\sqrt{1-0}\u_1^{(2,2)},0)^2}{4\gamma_{sq}}}\rp. \nonumber \\
 &\rightarrow &
  - \frac{1}{\c_2}\log \lp 2 \rp
- \alpha\frac{1}{\c_2}\log\lp    \frac{1}{2}\erfc\lp \frac{\kappa}{\sqrt{2}}\rp \rp.
    \end{eqnarray}
As earlier, the condition $f_{sq}^{(2,p)}(\infty)=0$ gives the critical $\alpha_c^{(2,p)}$
\begin{equation}\label{eq:negprac22a5}
a_c^{(2,p)} =
 - \frac{\log(2)}{\log \lp \frac{1}{2}\erfc\lp \frac{\kappa}{\sqrt{2}}\rp \rp}.
  \end{equation}
From (\ref{eq:negprac22a1a0})  and  (\ref{eq:negprac22a5}), we find $\kappa_c$ as the one that satisfies
\begin{equation}\label{eq:negprac22a6}
    \frac{2}{\pi\mE_{{\mathcal U}_2} \max(\kappa_c+\u_1^{(2,2)},0)^2}=
    \frac{2}{\pi\lp \frac{\kappa_c e^{-\frac{\kappa_c^2}{2}}}{\sqrt{2\pi}} + \frac{(\kappa_c^2+1)\erfc\lp -\frac{\kappa_c}{\sqrt{2}} \rp}{2}  \rp}
    =- \frac{\log(2)}{\log \lp \frac{1}{2}\erfc\lp \frac{\kappa_c}{\sqrt{2}}\rp \rp}.
  \end{equation}
Numerical evaluation gives $\kappa_c=0.602957...$ and
\begin{equation}\label{eq:negprac22a7}
a_c^{(2,p)}(\kappa) =\begin{cases}
    \frac{2}{\pi\lp \frac{\kappa e^{-\frac{\kappa^2}{2}}}{\sqrt{2\pi}} + \frac{(\kappa^2+1)\erfc\lp -\frac{\kappa}{\sqrt{2}} \rp}{2}  \rp}, & \mbox{if } \kappa\geq \kappa_c=0.602957... \\
             - \frac{\log(2)}{\log \lp \frac{1}{2}\erfc\lp \frac{\kappa}{\sqrt{2}}\rp \rp}, & \mbox{otherwise}.
           \end{cases}
  \end{equation}
Specializing to $\kappa=0$, we find
\begin{equation}\label{eq:negprac23}
\hspace{-2in}(\mbox{\emph{partial} second level:}) \qquad \qquad  a_c^{(2,p)}(0) =
 - \frac{\log(2)}{\log \lp \frac{1}{2}  \rp} = \bl{\mathbf{1}}.
  \end{equation}

\underline{2) \textbf{\emph{Full second level of lifting:}}}  For this part, we utilize the same setup as above with the exception that now (in addition to $\hat{\c}_{2}\neq 0$) one, in general,  also has $\p_2\neq0$ and $\q_2\neq0$. Analogously to (\ref{eq:negprac22}), we now have
\begin{eqnarray}\label{eq:negprac24}
    \bar{\psi}_{rd}(\p,\q,\c,\gamma_{sq},1,1,-1)   & = &  \frac{1}{2}
(1-\p_2\q_2)\c_2
  - \frac{1}{\c_2}\mE_{{\mathcal U}_3}\log\lp \mE_{{\mathcal U}_2} e^{\c_2|\sqrt{1-\q_2}\h_1^{(2)} +\sqrt{\q_2}\h_1^{(3)} |}\rp \nonumber \\
& &   + \gamma_{sq}
 -\alpha\frac{1}{\c_2}\mE_{{\mathcal U}_3} \log\lp \mE_{{\mathcal U}_2} e^{-\c_2\frac{\max(\sqrt{1-\p_2}\u_1^{(2,2)}+\sqrt{\p_2}\u_1^{(2,3)},0)^2}{4\gamma_{sq}}}\rp.
    \end{eqnarray}
One then first finds
\begin{eqnarray}\label{eq:negprac24a0}
f_{(z)}^{(2)} & = & \mE_{{\mathcal U}_2} e^{\c_2|\sqrt{1-\q_2}\h_1^{(2)} +\sqrt{\q_2}\h_1^{(3)} |}
  \nonumber \\
 & = &  \frac{1}{2}
 e^{\frac{(1-\q_2)\c_2^2}{2}}
 \Bigg(\Bigg.
 e^{-\c_2\sqrt{\q_2}\h_1^{(3)}}
 \erfc\lp - \lp\c_2\sqrt{1-\q_2}-\frac{\sqrt{\q_2}\h_1^{(3)}}{\sqrt{1-\q_2}}\rp\frac{1}{\sqrt{2}}\rp \nonumber \\
& &  + e^{\c_2\sqrt{\q_2}\h_1^{(3)}}
   \erfc\lp - \lp\c_2\sqrt{1-\q_2}+\frac{\sqrt{\q_2}\h_1^{(3)}}{\sqrt{1-\q_2}}\rp\frac{1}{\sqrt{2}}\rp
   \Bigg.\Bigg),
     \end{eqnarray}
and
\begin{eqnarray}\label{eq:negprac24a1}
  \mE_{{\mathcal U}_3}\log\lp \mE_{{\mathcal U}_2} e^{\c_2|\sqrt{1-\q_2}\h_1^{(2)} +\sqrt{\q_2}\h_1^{(3)} |}\rp
=  \mE_{{\mathcal U}_3}\log\lp f_{(z)}^{(2)}\rp.
    \end{eqnarray}
Moreover, we also have
\begin{eqnarray}\label{eq:negprac24a2}
\bar{h} & = &  -\frac{\sqrt{\p_2}\u_1^{(2,3)}+\kappa}{\sqrt{1-\p_2}}    \nonumber \\
\bar{B} & = & \frac{\c_2}{4\gamma_{sq}} %\rightarrow \infty
\nonumber \\
\bar{C} & = & \sqrt{\p_2}\u_1^{(2,3)}+\kappa \nonumber \\
f_{(zd)}^{(2,f)}& = & \frac{e^{-\frac{\bar{B}\bar{C}^2}{2(1-\p_2)\bar{B} + 1}}}{2\sqrt{2(1-\p_2)\bar{B} + 1}}
\erfc\lp\frac{\bar{h}}{\sqrt{4(1-\p_2)\bar{B} + 2}}\rp
%\rightarrow 0
\nonumber \\
f_{(zu)}^{(2,f)}& = & \frac{1}{2}\erfc\lp-\frac{\bar{h}}{\sqrt{2}}\rp,  %\rightarrow \frac{1}{2}\erfc\lp \frac{\kappa}{\sqrt{2}}\rp
   \end{eqnarray}
and
\begin{eqnarray}\label{eq:negprac24a3}
   \mE_{{\mathcal U}_3} \log\lp \mE_{{\mathcal U}_2} e^{-\c_2\frac{\max(\sqrt{1-\p_2}\u_1^{(2,2)}+\sqrt{\p_2}\u_1^{(2,3)},0)^2}{4\gamma_{sq}}}\rp
=   \mE_{{\mathcal U}_3} \log\lp f_{(zd)}^{(2,f)}+f_{(zu)}^{(2,f)}\rp.
    \end{eqnarray}
%    hc=-(b1*zvec+nu)/a1;
%    B=c3/4/gama;
%    C=b1*zvec+nu;
%    fzdown=(exp(-(B *C.^2)./(2 *a1^2*B + 1)).*( erfc((hc)./sqrt(4 *a1^2* B + 2))))./(2* sqrt(2 *a1^2 *B + 1));
%    fzup=.5*erfc(-hc/sqrt(2));
%    fz=fzdown+fzup;
%    Ibin=sum(log(fz).*exp(-zvec.^2/2))*dz/sqrt(2*pi);
After differentiating  (optimizing) with respect to $\gamma_{sq}$, $\p_2$, $\q_2$, and $\c_2$, we find $\c_2\rightarrow \infty$, $\gamma_{sq}\rightarrow 0$,
$\q_2\c_2^2\rightarrow \q_2^{(s)}$, and
\begin{eqnarray}\label{eq:negprac24a4}
f_{(z)}^{(2)}
 & \rightarrow &
 e^{\frac{\c_2^2-\q_2^{(s)}}{2}}
 \Bigg(\Bigg.
 e^{-\sqrt{\q_2^{(s)}}\h_1^{(3)}}
    + e^{\sqrt{\q_2^{(s)}}\h_1^{(3)}}
    \Bigg.\Bigg).
     \end{eqnarray}
We then observe that (\ref{eq:negprac24a2}) transforms into
\begin{eqnarray}\label{eq:negprac24a5}
\bar{h} & = &  -\frac{\sqrt{\p_2}\u_i^{(2,3)}+\kappa}{\sqrt{1-\p_2}}    \nonumber \\
\bar{B} & = & \frac{\c_2}{4\gamma_{sq}} \rightarrow \infty
\nonumber \\
\bar{C} & = & \sqrt{\p_2}\u_i^{(2,3)}+\kappa \nonumber \\
f_{(zd)}^{(2,f)}& = & \frac{e^{-\frac{\bar{B}\bar{C}^2}{2(1-\p_2)\bar{B} + 1}}}{2\sqrt{2(1-\p_2)\bar{B} + 1}}
\erfc\lp\frac{\bar{h}}{\sqrt{4(1-\p_2)\bar{B} + 2}}\rp
\rightarrow 0
\nonumber \\
f_{(zu)}^{(2,f)}& = & \frac{1}{2}\erfc\lp-\frac{\bar{h}}{\sqrt{2}}\rp
\rightarrow \frac{1}{2}\erfc\lp \frac{\sqrt{\p_2}\u_i^{(2,3)}+\kappa}{\sqrt{2}\sqrt{1-\p_2}} \rp.
   \end{eqnarray}
Combining (\ref{eq:negprac24}), (\ref{eq:negprac24a1}), (\ref{eq:negprac24a3}), (\ref{eq:negprac24a4}), and (\ref{eq:negprac24a5}), we obatin
\begin{eqnarray}\label{eq:negprac24a6}
-f_{sq}^{(2,f)}(\infty) & = &     \bar{\psi}_{rd}(\p,\q,\c,\gamma_{sq},1,1,-1) \nonumber \\
  & = &  \frac{1}{2}
(1-\p_2\q_2)\c_2
  - \frac{1}{\c_2}\mE_{{\mathcal U}_3}\log\lp \mE_{{\mathcal U}_2} e^{\c_2|\sqrt{1-\q_2}\h_1^{(2)} +\sqrt{\q_2}\h_1^{(3)} |}\rp \nonumber \\
& &   + \gamma_{sq}
 -\alpha\frac{1}{\c_2}\mE_{{\mathcal U}_3} \log\lp \mE_{{\mathcal U}_2} e^{-\c_2\frac{\max(\sqrt{1-\p_2}\u_1^{(2,2)}+\sqrt{\p_2}\u_1^{(2,3)},0)^2}{4\gamma_{sq}}}\rp \nonumber \\
 & = &  \frac{1}{2}
(1-\p_2\q_2)\c_2
  - \frac{1}{\c_2}\mE_{{\mathcal U}_3}\log\lp f_{(z)}^{(2)} \rp   + \gamma_{sq}
 -\alpha\frac{1}{\c_2}\mE_{{\mathcal U}_3} \log\lp f_{(zd)}^{(2,f)}+f_{(zu)}^{(2,f)}\rp \nonumber\\
 & \rightarrow & \frac{1}{2}
(1-\p_2\q_2)\c_2
  - \frac{1}{\c_2}\mE_{{\mathcal U}_3}\log\lp
 e^{\frac{\c_2^2-\q_2^{(s)}}{2}}
 \Bigg(\Bigg.
 e^{-\sqrt{\q_2^{(s)}}\h_1^{(3)}}
    + e^{\sqrt{\q_2^{(s)}}\h_1^{(3)}}
    \Bigg.\Bigg) \rp  \nonumber \\
& &
 -\alpha\frac{1}{\c_2}\mE_{{\mathcal U}_3} \log\lp   \frac{1}{2}\erfc\lp \frac{\sqrt{\p_2}\u_1^{(2,3)}+\kappa}{\sqrt{2}\sqrt{1-\p_2}} \rp  \rp \nonumber\\
 & \rightarrow & \frac{1}{2}
\frac{(1-\p_2)\q_2^{(s)}}{\c_2}
  - \frac{1}{\c_2}\mE_{{\mathcal U}_3}\log\lp 2 \cosh \lp \sqrt{\q_2^{(s)}}\h_1^{(3)}\rp \rp
  \nonumber \\
& &
 -\alpha\frac{1}{\c_2}\mE_{{\mathcal U}_3} \log\lp   \frac{1}{2}\erfc\lp \frac{\sqrt{\p_2}\u_1^{(2,3)}+\kappa}{\sqrt{2}\sqrt{1-\p_2}} \rp  \rp. \nonumber\\
    \end{eqnarray}
 Differentiating with respect to $\q_2^{(s)}$ and $\p_2$ then gives
\begin{eqnarray}\label{eq:negprac24a7}
  \frac{d  \bar{\psi}_{rd}(\p,\q,\c,\gamma_{sq},1,1,-1)}{d\q_2}
 &\rightarrow & \frac{1}{2}
\frac{(1-\p_2)}{\c_2}
  - \frac{1}{\c_2}\mE_{{\mathcal U}_3} \lp  \frac{\h_1^{(3)}}{2\sqrt{\q_2^{(s)}}} \tanh \lp \sqrt{\q_2^{(s)}}\h_1^{(3)} \rp
 \rp,  
    \end{eqnarray}
and
\begin{eqnarray}\label{eq:negprac24a8}
  \frac{d  \bar{\psi}_{rd}(\p,\q,\c,\gamma_{sq},1,1,-1)}{d\p_2}
 & \rightarrow & -
\frac{\q_2^{(s)}}{2\c_2}
 -\alpha\frac{1}{\c_2}\mE_{{\mathcal U}_3} \frac{-\frac{d }{d\p_2}\lp \erf\lp \frac{\sqrt{\p_2}\u_1^{(2,3)}+\kappa}{\sqrt{2}\sqrt{1-\p_2}} \rp  \rp}
 {\lp  \erfc\lp \frac{\sqrt{\p_2}\u_1^{(2,3)}+\kappa}{\sqrt{2}\sqrt{1-\p_2}} \rp  \rp} \nonumber \\
  & \rightarrow & -
\frac{\q_2^{(s)}}{2\c_2}
 -\alpha\frac{1}{\c_2}\mE_{{\mathcal U}_3} \lp  \frac{-\frac{2 }{\sqrt{\pi}}e^{-\lp \frac{\sqrt{\p_2}\u_1^{(2,3)}+\kappa}{\sqrt{2}\sqrt{1-\p_2}} \rp^2 } }
 {\lp  \erfc\lp \frac{\sqrt{\p_2}\u_1^{(2,3)}+\kappa}{\sqrt{2}\sqrt{1-\p_2}} \rp  \rp}
 \frac{d }{d\p_2} \lp \frac{\sqrt{\p_2}\u_1^{(2,3)}+\kappa}{\sqrt{2}\sqrt{1-\p_2}} \rp  \rp. \nonumber \\
    \end{eqnarray}
We then also find
\begin{eqnarray}\label{eq:negprac24a9}
  \frac{d }{d\p_2} \lp \frac{\sqrt{\p_2}\u_1^{(2,3)}+\kappa}{\sqrt{2}\sqrt{1-\p_2}} \rp
  =
   \frac{\frac{1}{2\sqrt{\p_2}}\u_1^{(2,3)}}{\sqrt{2}\sqrt{1-\p_2}}
+   \frac{\sqrt{\p_2}\u_1^{(2,3)}+\kappa}{2\sqrt{2}\sqrt{1-\p_2}^3}
=  \frac{\u_1^{(2,3)}+\sqrt{\p_2}\kappa}{2\sqrt{2}\sqrt{\p_2}\sqrt{1-\p_2}^3}.
  \end{eqnarray}
From (\ref{eq:negprac24a7}), we obtain
\begin{eqnarray}\label{eq:negprac24a10}
  \frac{d  \bar{\psi}_{rd}(\p,\q,\c,\gamma_{sq},1,1,-1)}{d\q_2}=0
 & \Longrightarrow &
 \p_2 \rightarrow 1 - \mE_{{\mathcal U}_3} \lp  \frac{\h_1^{(3)}}{\sqrt{\q_2^{(s)}}} \tanh \lp \sqrt{\q_2^{(s)}}\h_1^{(3)} \rp
 \rp  \triangleq \psi_{\p}(\q_2^{(s)}).  \nonumber\\
    \end{eqnarray}
Utilizing (\ref{eq:negprac24a6}), we have that $-f_{sq}^{(2,f)}(\infty)=\bar{\psi}_{rd}(\p,\q,\c,\gamma_{sq},1,1,-1) =0$ implies
\begin{eqnarray}\label{eq:negprac24a11}
   \bar{\psi}_{rd}(\p,\q,\c,\gamma_{sq},1,1,-1) =0
     \quad \Longrightarrow \quad
\alpha & \rightarrow & \frac{
\frac{(1-\p_2)\q_2^{(s)}}{2}
  - \mE_{{\mathcal U}_3}\log\lp  2\cosh \lp \sqrt{\q_2^{(s)}}\h_1^{(3)}\rp \rp
}
{   \mE_{{\mathcal U}_3} \log\lp   \frac{1}{2}\erfc\lp \frac{\sqrt{\p_2}\u_1^{(2,3)}+\kappa}{\sqrt{2}\sqrt{1-\p_2}} \rp  \rp} \nonumber \\
& \rightarrow &
\frac{
\frac{(1-\psi_{\p}(\q_2^{(s)}))\q_2^{(s)}}{2}
  - \mE_{{\mathcal U}_3}\log\lp  2\cosh \lp \sqrt{\q_2^{(s)}}\h_1^{(3)}\rp \rp
}
{   \mE_{{\mathcal U}_3} \log\lp   \frac{1}{2}\erfc\lp \frac{\sqrt{\psi_{\p}(\q_2^{(s)})}\u_1^{(2,3)}+\kappa}{\sqrt{2}\sqrt{1-\psi_{\p}(\q_2^{(s)})}} \rp  \rp} \triangleq \psi_{\alpha}(\q_2^{(s)}) . \nonumber\\
    \end{eqnarray}
From (\ref{eq:negprac24a8}) and (\ref{eq:negprac24a9}), we have
\begin{eqnarray}\label{eq:negprac24a12}
  \frac{d  \bar{\psi}_{rd}(\p,\q,\c,\gamma_{sq},1,1,-1)}{d\p_2} =0
 & \Longrightarrow &
\q_2^{(s)}
\rightarrow \alpha \mE_{{\mathcal U}_3} \lp  \frac{\sqrt{\frac{2}{\pi}}e^{-\lp \frac{\sqrt{\p_2}\u_1^{(2,3)}+\kappa}{\sqrt{2}\sqrt{1-\p_2}} \rp^2 } }
 {\lp  \erfc\lp \frac{\sqrt{\p_2}\u_1^{(2,3)}+\kappa}{\sqrt{2}\sqrt{1-\p_2}} \rp  \rp}
\lp \frac{\u_1^{(2,3)}+\sqrt{\p_2}\kappa}{\sqrt{\p_2}\sqrt{1-\p_2}^3} \rp \rp.
    \end{eqnarray}
A combination of (\ref{eq:negprac24a10}), (\ref{eq:negprac24a11}), and (\ref{eq:negprac24a12}) then gives
\begin{eqnarray}\label{eq:negprac24a13}
 \q_2^{(s)}
\rightarrow \psi_{\alpha}(\q_2^{(s)})
 \mE_{{\mathcal U}_3} \lp \frac{\sqrt{\frac{2}{\pi}}e^{-\lp \frac{\sqrt{\psi_{\p}(\q_2^{(s)})}\u_1^{(2,3)}+\kappa}{\sqrt{2}\sqrt{1-\psi_{\p}(\q_2^{(s)})}} \rp^2 } }
 {\lp  \erfc\lp \frac{\sqrt{\psi_{\p}(\q_2^{(s)})}\u_1^{(2,3)}+\kappa}{\sqrt{2}\sqrt{1-\psi_{\p}(\q_2^{(s)})}} \rp  \rp}
\lp \frac{\u_1^{(2,3)}+\sqrt{\psi_{\p}(\q_2^{(s)})}\kappa}{\sqrt{\psi_{\p}(\q_2^{(s)})}\sqrt{1-\psi_{\p}(\q_2^{(s)})}^3} \rp \rp
\triangleq \psi_{\q}(\q_2^{(s)}).
    \end{eqnarray}
Assume that  $\hat{\q}_2^{(s)}$ satisfies
\begin{eqnarray}\label{eq:negprac24a14}
 \hat{\q}_2^{(s)}-\psi_{\q}(\hat{\q}_2^{(s)})=0,
    \end{eqnarray}
    and let
\begin{eqnarray}\label{eq:negprac24a14a0}
  \psi_{\p}(\hat{\q}_2^{(s)}) = 1 - \mE_{{\mathcal U}_3} \lp  \frac{\h_1^{(3)}}{\sqrt{\hat{\q}_2^{(s)}}} \tanh \lp \sqrt{\hat{\q}_2^{(s)}}\h_1^{(3)} \rp
 \rp= \mE_{{\mathcal U}_3} \lp  \tanh \lp \sqrt{\hat{\q}_2^{(s)}}\h_1^{(3)} \rp^2
 \rp.
    \end{eqnarray}
 Provided that $\psi_{\p}(\hat{\q}_2^{(s)})\in [0,1]$ (which trivially holds as $\tanh^2(\cdot)\in[0,1]$), then
\begin{eqnarray}\label{eq:negprac24a15}
\alpha_c^{(2,f)}(\kappa)
\rightarrow
\frac{
\frac{(1-\psi_{\p}(\hat{\q}_2^{(s)}))\hat{\q}_2^{(s)}}{2}
  - \mE_{{\mathcal U}_3}\log\lp  2\cosh \lp \sqrt{\hat{\q}_2^{(s)}}\h_1^{(3)}\rp \rp
}
{   \mE_{{\mathcal U}_3} \log\lp   \frac{1}{2}\erfc\lp \frac{\sqrt{\psi_{\p}(\hat{\q}_2^{(s)})}\u_1^{(2,3)}+\kappa}{\sqrt{2}\sqrt{1-\psi_{\p}(\hat{\q}_2^{(s)})}} \rp  \rp}.
\end{eqnarray}
Specializing to $\kappa=0$, we find $\psi_{\p}(\hat{\q}_2^{(s)}) = 0.5639...$ (which clearly belongs to $[0,1]$ interval) and
\begin{equation}\label{eq:negprac25}
\hspace{-2in}(\mbox{\emph{full} second level:}) \qquad \qquad  a_c^{(2,f)}(0) \rightarrow
 \bl{\mathbf{0.8330786...}}.
  \end{equation}
For the special case, $\kappa=0$, all the relevant quantities for the first full (1-sfl RDT), second partial (2-spf RDT), and second full (2-sfl RDT) level of lifting are systematically shown in Table \ref{tab:tab1}.
\begin{table}[h]
\caption{$r$-sfl RDT parameters; binary $\pm 1$ perceptron capacity;  $\hat{\c}_1\rightarrow 1$; $\kappa=0$; $n,\beta\rightarrow\infty$}\vspace{.1in}
%\begin{adjustwidth}{-1.4cm}{}
\centering
\def\arraystretch{1.2}
\begin{tabular}{||l||c||c|c||c|c||c||c||}\hline\hline
 \hspace{-0in}$r$-sfl RDT                                             & $\hat{\gamma}_{sq}$    &  $\hat{\p}_2$ & $\hat{\p}_1`$     & $\hat{\q}_2^{(s)}\rightarrow \hat{\q}_2\hat{\c}_2^2$  & $\hat{\q}_1$ &  $\hat{\c}_2$    & $\alpha_c^{(r)}(0)$  \\ \hline\hline
$1$-sfl RDT                                      & $0.3989$ &  $0$  & $\rightarrow 1$   & $0$ & $\rightarrow 1$
 &  $\rightarrow 0$  & \bl{$\mathbf{1.2732}$} \\ \hline\hline
 $2$-spl RDT                                      & $0$ &  $0$ & $\rightarrow 1$ &  $0$ & $\rightarrow 1$ &   $\rightarrow \infty$   & \bl{$\mathbf{1}$} \\ \hline
  $2$-sfl RDT                                      & $0$  & $0.5639$ & $\rightarrow 1$ &  $2.5764$ & $\rightarrow 1$
 &  $\rightarrow \infty$   & \bl{$\mathbf{0.8331}$}  \\ \hline\hline
  \end{tabular}
%\end{adjustwidth}
\label{tab:tab1}
\end{table}
Also, for the completeness, we show in Figure \ref{fig:fig0}, behavior of function $\q_2^{(s)}-\psi_{\q}(\q_2^{(s)})$, which indicates the uniqueness of $\hat{\q}_2^{(s)}$.
\begin{figure}[h]
%\begin{minipage}[b]{.5\linewidth}
\centering
\centerline{\includegraphics[width=0.6\linewidth]{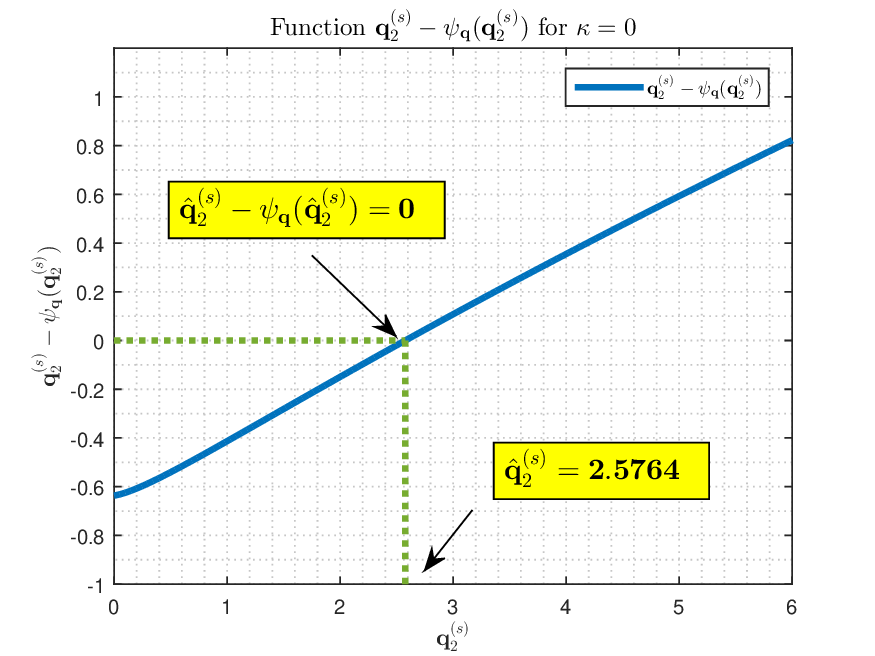}}
%\end{minipage}
%\begin{minipage}[b]{.5\linewidth}
%\centering
%\centerline{\epsfig{figure=finprerral08.eps,width=9cm,height=6.5cm}}
%\end{minipage}
\caption{Uniqueness of $\hat{\q}_2^{(s)}$;  $\q_2^{(s)}-\psi_{\q}(\q_2^{(s)})$ as a function of $\q_2^{(s)}$; $\kappa=0$}
\label{fig:fig0}
\end{figure}

As we have found no further changes for $r=3$, the following theorem summarizes the above considerations.
\begin{theorem}
\label{thm:thmseclev}
  Assume the complete sfl RDT frame of\cite{Stojnicsflgscompyx23} with $r=2$. Let
  ${\mathcal F}(G,\b,\cX,\alpha)$ and $\xi_{feas}(0,\cX)$ be as in (\ref{eq:ex1}) and (\ref{eq:ex4}), respectively. Assume that the elements of $G\in\mR^{m\times n}$, $\h_1^{(3)}$, and $\u_1^{(2,3)}$ are i.i.d. standard normals and let
 \begin{eqnarray}
\alpha & = &    \lim_{n\rightarrow \infty} \frac{m}{n}  \nonumber \\
\alpha_c^{(b)}(\kappa) & \triangleq & \max \left \{\alpha |\hspace{.08in}  \lim_{n\rightarrow\infty}\mP_G \lp \xi_{perc}\lp0,\left \{-\frac{1}{\sqrt{n}},\frac{1}{\sqrt{n}}\right \}^n\rp\triangleq \xi_{feas}\lp0,\left \{-\frac{1}{\sqrt{n}},\frac{1}{\sqrt{n}}\right \}^n\rp>0\rp\longrightarrow 1 \right\} \nonumber \\
& = & \max \left \{\alpha |\hspace{.08in}  \lim_{n\rightarrow\infty}\mP_G\lp {\mathcal F}\lp G,\kappa\1,\left \{-\frac{1}{\sqrt{n}},\frac{1}{\sqrt{n}}\right \}^n,\alpha\rp \hspace{.07in}\mbox{is feasible} \rp\longrightarrow 1 \right \}.
  \label{eq:thmseclevex1}
\end{eqnarray}
Set
\begin{eqnarray}\label{eq:thmseclevex2}
  \psi_{\p}(\q_2^{(s)}) &\triangleq & 1 - \mE_{{\mathcal U}_3} \lp  \frac{\h_1^{(3)}}{\sqrt{\q_2^{(s)}}} \tanh \lp \sqrt{\q_2^{(s)}}\h_1^{(3)} \rp
 \rp  \nonumber\\
\psi_{\alpha}(\q_2^{(s)}) & \triangleq &
 \frac{
\frac{(1-\psi_{\p}(\q_2^{(s)}))\q_2^{(s)}}{2}
  - \mE_{{\mathcal U}_3}\log\lp  2\cosh \lp \sqrt{\q_2^{(s)}}\h_1^{(3)}\rp \rp
}
{   \mE_{{\mathcal U}_3} \log\lp   \frac{1}{2}\erfc\lp \frac{\sqrt{\psi_{\p}(\q_2^{(s)})}\u_1^{(2,3)}+\kappa}{\sqrt{2}\sqrt{1-\psi_{\p}(\q_2^{(s)})}} \rp  \rp} \nonumber\\
\psi_{\q}(\q_2^{(s)}) & \triangleq & \psi_{\alpha}(\q_2^{(s)})
 \mE_{{\mathcal U}_3} \lp \frac{\sqrt{\frac{2}{\pi}}e^{-\lp \frac{\sqrt{\psi_{\p}(\q_2^{(s)})}\u_1^{(2,3)}+\kappa}{\sqrt{2}\sqrt{1-\psi_{\p}(\q_2^{(s)})}} \rp^2 } }
 {\lp  \erfc\lp \frac{\sqrt{\psi_{\p}(\q_2^{(s)})}\u_1^{(2,3)}+\kappa}{\sqrt{2}\sqrt{1-\psi_{\p}(\q_2^{(s)})}} \rp  \rp}
\lp \frac{\u_1^{(2,3)}+\sqrt{\psi_{\p}(\q_2^{(s)})}\kappa}{\sqrt{\psi_{\p}(\q_2^{(s)})}\sqrt{1-\psi_{\p}(\q_2^{(s)})}^3} \rp \rp
    \end{eqnarray}
Assuming that $\hat{\q}_2^{(s)}$ satisfies
\begin{eqnarray}\label{eq:thmseclevex3}
 \hat{\q}_2^{(s)}-\psi_{\q}(\hat{\q}_2^{(s)})=0,
    \end{eqnarray}
    then
 \begin{eqnarray}\label{eq:negprac24a15}
\alpha_c^{(b)}(\kappa)\rightarrow \alpha_c^{(2,f)}(\kappa)
\rightarrow
\psi_{\alpha}(\hat{\q}_2^{(s)}).
\end{eqnarray}
\end{theorem}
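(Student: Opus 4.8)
The plan is to obtain the stated characterization as the $r=2$ specialization of the practical sfl RDT identity of Theorem \ref{thme:negthmprac1}, combined with the elementary feasibility/infeasibility dichotomy for $\xi_{feas}(0,\cX)$. The backbone is the chain Theorem \ref{thm:thmsflrdt1} $\Rightarrow$ Corollary \ref{cor:cor1} $\Rightarrow$ Theorem \ref{thme:negthmprac1}. Taking $s=-1$ there ties the random primal $\psi_{rp}$ to $\xi_{feas}(0,\cX)$ (using the sign symmetry of $G$), so that $\mE_G\,\xi_{feas}(0,\cX)/\sqrt{n}\to -f_{sq}(\infty)=\lim_n\psi_{rd}(\hat{\p},\hat{\q},\hat{\c},1,1,-1)$. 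Since $\xi_{feas}(0,\cX)$ concentrates (the probabilistic variant mentioned right after Corollary \ref{cor:cor1}), the sign of the deterministic limit $-f_{sq}(\infty)$ controls feasibility, and $\alpha_c^{(b)}(\kappa)$ is precisely the $\alpha$ at which $-f_{sq}(\infty)$ crosses zero. The whole argument therefore reduces to (i) evaluating $\psi_{rd}$ at level $r=2$ in the ground-state regime and (ii) solving $-f_{sq}^{(2,f)}(\infty)=0$ for $\alpha$.

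For step (i) I would put $r=2$ in Theorem \ref{thme:negthmprac1}, so that the fixed parts are $\hat{\p}_1,\hat{\q}_1,\hat{\c}_1\to 1$ and $\hat{\p}_3=\hat{\q}_3=\hat{\c}_3=0$, leaving $\p_2,\q_2,\c_2,\gamma_{sq}$ to be optimized. The nested Gaussian expectations defining $\varphi(D_1^{(bin)},\c)$ and $\varphi(-D_1^{(sph)},\c)$ are then computed in closed form: the binary block yields the integral $\mE_{{\mathcal U}_3}\log\mE_{{\mathcal U}_2}e^{\c_2|\sqrt{1-\q_2}\h_1^{(2)}+\sqrt{\q_2}\h_1^{(3)}|}$, evaluated as $f_{(z)}^{(2)}$ in (\ref{eq:negprac24a0}), while the spherical block yields a truncated second-moment term whose inner Gaussian integral is $f_{(zd)}^{(2,f)}+f_{(zu)}^{(2,f)}$ from (\ref{eq:negprac24a2})–(\ref{eq:negprac24a3}); this produces the exact level-$2$ objective (\ref{eq:negprac24}). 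The ground-state ($\beta\to\infty$) behaviour is then read off from the stationarity equations, which force $\c_2\to\infty$, $\gamma_{sq}\to 0$, with the product $\q_2\c_2^2\to\q_2^{(s)}$ staying finite. Inserting this scaling, $f_{(z)}^{(2)}$ collapses to $2\cosh(\sqrt{\q_2^{(s)}}\h_1^{(3)})$ up to a Gaussian prefactor, $f_{(zd)}^{(2,f)}\to 0$, and $f_{(zu)}^{(2,f)}\to\frac{1}{2}\erfc\big(\frac{\sqrt{\p_2}\u_1^{(2,3)}+\kappa}{\sqrt{2}\sqrt{1-\p_2}}\big)$; the $\c_2$-divergent pieces cancel against $\frac{1}{2}(1-\p_2\q_2)\c_2$, leaving the reduced objective (\ref{eq:negprac24a6}).

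For step (ii) I would impose the remaining stationarity conditions on (\ref{eq:negprac24a6}): $d\bar{\psi}_{rd}/d\q_2=0$ gives $\p_2=\psi_{\p}(\q_2^{(s)})$; $d\bar{\psi}_{rd}/d\p_2=0$ gives $\q_2^{(s)}$ as an explicit $\alpha$-multiple of a Gaussian integral; and the capacity equation $-f_{sq}^{(2,f)}(\infty)=\bar{\psi}_{rd}=0$ solves for $\alpha=\psi_{\alpha}(\q_2^{(s)})$. Back-substituting to eliminate $\alpha$ and $\p_2$ leaves the single scalar equation $\hat{\q}_2^{(s)}=\psi_{\q}(\hat{\q}_2^{(s)})$ of (\ref{eq:thmseclevex3}), whence $\alpha_c^{(2,f)}(\kappa)=\psi_{\alpha}(\hat{\q}_2^{(s)})$. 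The side conditions needed for this to be well posed are immediate: $\psi_{\p}(\hat{\q}_2^{(s)})=\mE_{{\mathcal U}_3}\tanh^2(\sqrt{\hat{\q}_2^{(s)}}\h_1^{(3)})\in[0,1]$ by (\ref{eq:negprac24a14a0}), so the $\erfc$ arguments are real, and the fixed point $\hat{\q}_2^{(s)}$ is unique (Figure \ref{fig:fig0}), so $\psi_{\alpha}(\hat{\q}_2^{(s)})$ is unambiguous. A final appeal to the concentration of $\xi_{feas}$ upgrades the $\mE_G$-level identity to the high-probability statement in the definition of $\alpha_c^{(b)}(\kappa)$ in (\ref{eq:thmseclevex1}).

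The main obstacle is not any individual integral but justifying that $r=2$ is the \emph{stationarized} level, i.e., that the level-$3$ (and higher) stationarity systems are already satisfied by the level-$2$ optimizer extended by trivial additional components; the paper records this as ``no further changes for $r=3$'', supported by numerical verification rather than a closed-form argument, and a fully rigorous treatment would require controlling the lifting hierarchy uniformly in $r$. A secondary technical point is the interchange of the $n\to\infty$ and $\beta\to\infty$ limits together with the neglect of the $O(1/\beta)$ terms in passing to the ground state; both are absorbed into the general sfl RDT apparatus of \cite{Stojnicsflgscompyx23,Stojnicnflgscompyx23,Stojnicflrdt23,Stojnichopflrdt23} and are inherited here unchanged. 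Modulo these points, the theorem is the bookkeeping assembly of the explicit level-$2$ calculations (\ref{eq:negprac24})–(\ref{eq:negprac24a14a0}) carried out in the text.
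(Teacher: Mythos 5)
Your proposal follows the paper's own route essentially verbatim: it assembles the theorem from the $r=2$ specialization of Theorem \ref{thme:negthmprac1}, the explicit Gaussian evaluations and ground-state scaling $\c_2\rightarrow\infty$, $\gamma_{sq}\rightarrow 0$, $\q_2\c_2^2\rightarrow\q_2^{(s)}$ in (\ref{eq:negprac24})--(\ref{eq:negprac24a14a0}), the stationarity/zero-crossing system yielding $\psi_{\p}$, $\psi_{\alpha}$, $\psi_{\q}$, and the concentration of $\xi_{feas}$. You also correctly identify the same residual caveats the paper itself leaves to numerics (stability at $r=3$, the limit interchanges absorbed into the sfl RDT machinery), so there is nothing substantive to add.
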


In Table \ref{tab:tab2}, we show the full second level of lifting results over a range of $\kappa$ obtained based on the above theorem.
\begin{table}[h]
\caption{$2$-sfl RDT parameters; \emph{binary}, $\pm 1$, perceptron capacity $\alpha=\alpha_c^{(2,f)}(\kappa)$;  $\hat{\q}^{(s)}_2=\lim_{\hat{\c}_2\rightarrow\infty} \hat{\q}_2\hat{\c}_2^2$ }\vspace{.1in}
%\begin{adjustwidth}{-1.4cm}{}
\centering
\def\arraystretch{1.2}
\begin{tabular}{||l||c|c|c|c|c|c|c|c|c|c||}\hline\hline
 \hspace{-0in}$\kappa$                                             & $\mathbf{-0.6}$    & $\mathbf{-0.4}$ & $\mathbf{-0.2}$ & $\mathbf{0}$        & $\mathbf{0.2}$  & $\mathbf{0.4}$   & $\mathbf{0.6}$  &  $\mathbf{0.8}$    & $\mathbf{1.0}$    & $\mathbf{1.2}$  \\ \hline\hline
$\hat{\p}_2$                                       & $0.4536$ & $0.4932$ & $0.5299$ & $0.5639$ & $0.5955$ & $ 0.6246 $ & $0.6517$ & $0.6767$ & $0.6999$ & $0.7213$
 \\ \hline
$\hat{\q}_{2}^{(s)}$\hspace{-.08in}                                      & $1.3837   $  & $ 1.7215   $  & $ 2.1162   $  & $ 2.5764   $  & $ 3.1120  $  & $  3.7349  $  & $  4.4586 $  & $   5.2984   $  & $ 6.2717 $  & $   7.3981   $  \\ \hline \hline
 $\alpha$                                      & $\bl{\mathbf{1.9570}}  $ & $ \bl{\mathbf{ 1.4468 }} $ & $  \bl{\mathbf{1.0888}} $ & $  \bl{\mathbf{ 0.8331 }} $ & $ \bl{\mathbf{ 0.6474 }}$ & $  \bl{\mathbf{ 0.5105}}  $ & $ \bl{\mathbf{ 0.4081}} $ & $  \bl{\mathbf{ 0.3304 }} $ & $ \bl{\mathbf{ 0.2707 }}$ & $   \bl{\mathbf{0.2243 }}$ \\ \hline\hline
 \end{tabular}
%\end{adjustwidth}
\label{tab:tab2}
\end{table}
On the other hand, for the same range of $\kappa$, we, in Table \ref{tab:tab3}, show the progression of the capacity as the level of lifting increases. Finally, the results from Table \ref{tab:tab3} are visualized in Figure \ref{fig:fig1}. For the completeness, we, in Figure \ref{fig:fig1}, also provide (as yellow dots) the capacity values obtained practically by running a simple gradient based algorithm.
\begin{table}[h]
\caption{\emph{Binary}, $\pm 1$, perceptron capacity $\alpha_c(\kappa)$ --- progression of $r$-sfl RDT mechanism}\vspace{.1in}
%\begin{adjustwidth}{-1.4cm}{}
\centering
\def\arraystretch{1.2}
\begin{tabular}{||l||c|c|c|c|c|c|c|c|c||}\hline\hline
 \hspace{-0in}$\kappa$                                               & $\mathbf{-0.4}$ & $\mathbf{-0.2}$ & $\mathbf{0}$        & $\mathbf{0.2}$  & $\mathbf{0.4}$   & $\mathbf{0.6}$  &  $\mathbf{0.8}$    & $\mathbf{1.0}$    & $\mathbf{1.2}$  \\ \hline\hline
$\alpha_c^{(1,f)}(\kappa)$                    & $    2.5222 $  & $    1.7715   $  & $  1.2732 $  & $    0.9353 $  & $    0.7014 $  & $    0.5363    $  & $  0.4176  $  & $   0.3308  $  & $   0.2661
$
 \\ \hline
$\alpha_c^{(2,p)}(\kappa)$                                      & $  1.6407  $ & $   1.2695   $ & $  1.0000  $ & $   0.8006   $ & $  0.6506   $ & $  0.5358   $ & $  0.4176  $ & $   0.3308   $ & $  0.2661 $  \\ \hline \hline
 $\alpha_c^{(2,f)}(\kappa)$                                      & $ \bl{\mathbf{ 1.4468 }} $ & $  \bl{\mathbf{1.0888}} $ & $  \bl{\mathbf{ 0.8331 }} $ & $ \bl{\mathbf{ 0.6474 }}$ & $  \bl{\mathbf{ 0.5105}}  $ & $ \bl{\mathbf{ 0.4081}} $ & $  \bl{\mathbf{ 0.3304 }} $ & $ \bl{\mathbf{ 0.2707 }}$ & $   \bl{\mathbf{0.2243 }}$ \\ \hline\hline
 \end{tabular}
%\end{adjustwidth}
\label{tab:tab3}
\end{table}

\begin{figure}[h]
%\begin{minipage}[b]{.5\linewidth}
\centering
\centerline{\includegraphics[width=0.6\linewidth]{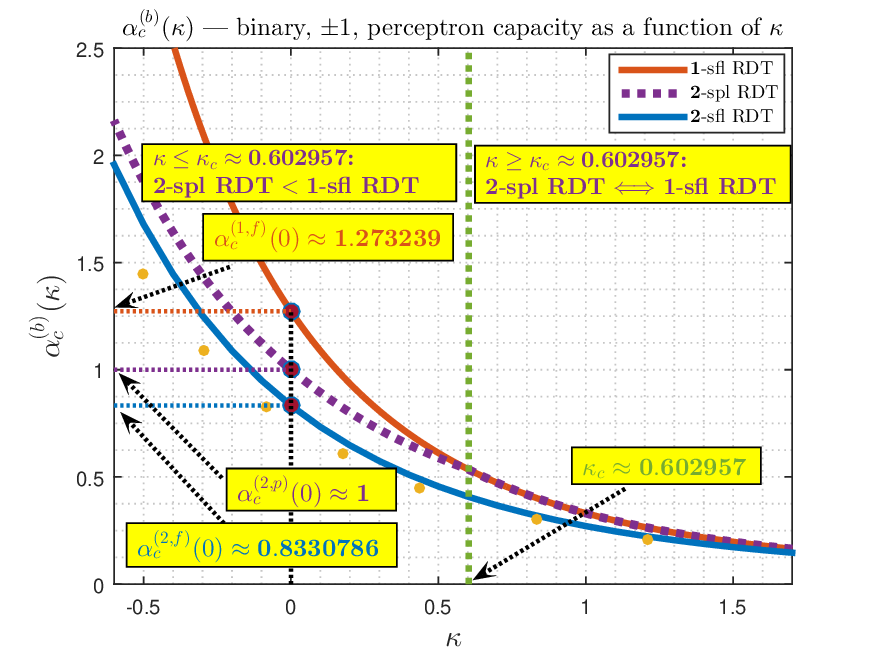}}
%\end{minipage}
%\begin{minipage}[b]{.5\linewidth}
%\centering
%\centerline{\epsfig{figure=finprerral08.eps,width=9cm,height=6.5cm}}
%\end{minipage}
\caption{Binary perceptron capacity as a function of $\kappa$}
\label{fig:fig1}
\end{figure}

It is also interesting to note that the capacity results of the above theorem (as well as the results presented in Table \ref{tab:tab3} and Figure \ref{fig:fig1})
are fully matching both those obtained through replica theory based considerations in \cite{KraMez89} and those obtained through rigorous second moment lower bounding analysis from \cite{DingSun19,NakSun23}. Particularly fascinating is that, despite the fact that the so-called partition functions, as the main objects of study in all these papers, are completely different from ours, the final conclusions and underlying key functions needed to reach them are identical.

%%%%%%%%%%%%%%%%%%%%%%%%%%%%%%%%%%%%%%%%%%%%%%%%%%%%%%%%%%%%%%%%%%%%%%%%%%%%%%%%%%%%%%%%%%%%%%%%%
%%%%%%%%%%%%%%%%%%%%%%%%%%%%%%%%%%%%%%%%%%%%%%%%%%%%%%%%%%%%%%%%%%%%%%%%%%%%%%%%%%%%%%%%%%%%%%%%%
%%%%%%%%%%%%%%%%%%%%%%%%%%%%%%%%%%%%%%%%%%%%%%%%%%%%%%%%%%%%%%%%%%%%%%%%%%%%%%%%%%%%%%%%%%%%%%%%%
%%%%%%%%%%%%%%%%%%%%%%%%%%%%%%%%%%%%%%%%%%%%%%%%%%%%%%%%%%%%%%%%%%%%%%%%%%%%%%%%%%%%%%%%%%%%%%%%%
%%%%%%%%%%%%%%%%%%%%%%%%%%%%%%%%%%%%%%%%%%%%%%%%%%%%%%%%%%%%%%%%%%%%%%%%%%%%%%%%%%%%%%%%%%%%%%%%%
\subsubsection{Modulo-$\m$ sfl RDT}
\label{sec:posmodm}
%%%%%%%%%%%%%%%%%%%%%%%%%%%%%%%%%%%%%%%%%%%%%%%%%%%%%%%%%%%%%%%%%%%%%%%%%%%%%%%%%%%%%%%%%%%%%%%%%
%%%%%%%%%%%%%%%%%%%%%%%%%%%%%%%%%%%%%%%%%%%%%%%%%%%%%%%%%%%%%%%%%%%%%%%%%%%%%%%%%%%%%%%%%%%%%%%%%
%%%%%%%%%%%%%%%%%%%%%%%%%%%%%%%%%%%%%%%%%%%%%%%%%%%%%%%%%%%%%%%%%%%%%%%%%%%%%%%%%%%%%%%%%%%%%%%%%
%%%%%%%%%%%%%%%%%%%%%%%%%%%%%%%%%%%%%%%%%%%%%%%%%%%%%%%%%%%%%%%%%%%%%%%%%%%%%%%%%%%%%%%%%%%%%%%%%
%%%%%%%%%%%%%%%%%%%%%%%%%%%%%%%%%%%%%%%%%%%%%%%%%%%%%%%%%%%%%%%%%%%%%%%%%%%%%%%%%%%%%%%%%%%%%%%%%

 We also note that the above can be repeated relying on the so-called modulo-$m$ sfl RDT frame of \cite{Stojnicsflgscompyx23}. Instead of Theorem \ref{thme:negthmprac1}, one then basically has the following theorem.
 \begin{theorem}
  \label{thme:negthmprac2}
  Assume the setup of Theorem \ref{thme:negthmprac1} and instead of the complete, assume the modulo-$\m$ sfl RDT setup of \cite{Stojnicsflgscompyx23}.
   Let the ``fixed'' parts of $\hat{\p}$, $\hat{\q}$, and $\hat{\c}$ satisfy $\hat{\p}_1\rightarrow 1$, $\hat{\q}_1\rightarrow 1$, $\hat{\c}_1\rightarrow 1$, $\hat{\p}_{r+1}=\hat{\q}_{r+1}=\hat{\c}_{r+1}=0$, and let the ``non-fixed'' parts of $\hat{\p}_k$, and $\hat{\q}_k$ ($k\in\{2,3,\dots,r\}$) be the solutions of the following system of equations
  \begin{eqnarray}\label{eq:negthmprac2eq1}
   \frac{d \bar{\psi}_{rd}(\p,\q,\c,\gamma_{sq},1,1,-1)}{d\p} =  0 \nonumber \\
   \frac{d \bar{\psi}_{rd}(\p,\q,\c,\gamma_{sq},1,1,-1)}{d\q} =  0 \nonumber \\
    \frac{d \bar{\psi}_{rd}(\p,\q,\c,\gamma_{sq},1,1,-1)}{d\gamma_{sq}} =  0.
 \end{eqnarray}
 Consequently, let
\begin{eqnarray}\label{eq:negthmprac2eq2}
c_k(\hat{\p},\hat{\q})  & = & \sqrt{\hat{\q}_{k-1}-\hat{\q}_k} \nonumber \\
b_k(\hat{\p},\hat{\q})  & = & \sqrt{\hat{\p}_{k-1}-\hat{\p}_k}.
 \end{eqnarray}
 Then
 \begin{align}
-f_{sq,\m}(\infty)
& \leq     \max_{\c} \frac{1}{2}    \sum_{k=2}^{r+1}\Bigg(\Bigg.
   \hat{\p}_{k-1}\hat{\q}_{k-1}
   -\hat{\p}_{k}\hat{\q}_{k}
  \Bigg.\Bigg)
\c_k
  - \varphi(D_1^{(bin)}(c_k(\hat{\p},\hat{\q})),\c) + \hat{\gamma}_{sq} - \alpha\varphi(-D_1^{(sph)}(b_k(\hat{\p},\hat{\q})),\c). \nonumber \\
  \label{eq:negthmprac2eq3}
\end{align}
\end{theorem}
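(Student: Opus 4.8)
The plan is to obtain the inequality (\ref{eq:negthmprac2eq3}) by combining two ingredients that are already in place: (i) the modulo-$\m$ analogue of the core sfl RDT statement (Theorem~\ref{thm:thmsflrdt1} and Corollary~\ref{cor:cor1}) established in \cite{Stojnicsflgscompyx23}, in which full stationarization over $\c$ is \emph{not} imposed and the strong sfl random duality equality is correspondingly weakened to a one-sided inequality supplemented by a maximization over $\c$; and (ii) the binary $\pm 1$ perceptron specific reductions of $\varphi(D_1^{(bin)})$ and $\varphi(-D_1^{(sph)})$ carried out in (\ref{eq:prac4})--(\ref{eq:prac10}) and (\ref{eq:negprac13}), none of which uses $\c$-stationarity and which therefore transfer verbatim.

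First I would recall the modulo-$\m$ frame. There the level-by-level Gaussian comparison controlling the bilinearly indexed process $\y^TG\x$ is run with the $\c$-parameters frozen at an arbitrary admissible value; closing the interpolation level by level yields, for every such $\c$, a valid bound on $\lim_{n\to\infty}\mE_G\psi_{rp}/\sqrt{n}$ in terms of $\psi_{rd}(\hat{\p},\hat{\q},\c,1,1,-1)$, where now only the $\p$- and $\q$-stationarity conditions (and, once the square-root trick is invoked, the $\gamma_{sq}$-stationarity condition) are required. Since the bound holds for every admissible $\c$, it holds in particular for the $\c$ that makes the right-hand side largest, which is exactly the $\max_{\c}$ in (\ref{eq:negthmprac2eq3}). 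This is the one genuinely new structural input; in the present paper it is cited as a black box from \cite{Stojnicsflgscompyx23}, so the remaining work is only to check that the perceptron-specific decoupling respects it.

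Next I would specialize to $s=-1$ and reconnect with the ground-state free energy exactly as in Section~\ref{sec:neg}: using (\ref{eq:limlogpartfunsqrt}) and the modulo-$\m$ counterpart of (\ref{eq:negprac11}) one writes $-f_{sq,\m}(\infty)=\lim_{n\to\infty}\mE_G\psi_{rp}/\sqrt{n}\le\max_{\c}\psi_{rd}(\hat{\p},\hat{\q},\c,1,1,-1)$. I would then substitute the two closed forms: for $\cX=\{-\frac{1}{\sqrt{n}},\frac{1}{\sqrt{n}}\}^n$ the binary block collapses, via the absolute-value evaluation (\ref{eq:prac4})--(\ref{eq:prac6}), to $n\varphi(D_1^{(bin)})$; for $\cY=\mS_+^m$ the spherical block, after the square-root trick (\ref{eq:prac8})--(\ref{eq:prac10}) with the scaling $\gamma=\gamma_{sq}\sqrt{n}$, collapses (up to the $1/n$ normalization and $m/n\to\alpha$) to $\gamma_{sq}-\alpha\varphi(-D_1^{(sph)})$ together with the optimization over $\gamma_{sq}$. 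None of these steps touches $\c$-stationarity, so they reproduce the integrand $\bar{\psi}_{rd}(\p,\q,\c,\gamma_{sq},1,1,-1)$ of (\ref{eq:negprac13}) with $\p,\q,\gamma_{sq}$ replaced by their (now $\c$-dependent) solutions of the system (\ref{eq:negthmprac2eq1}); taking $\max_{\c}$ gives (\ref{eq:negthmprac2eq3}). The accompanying probabilistic statement follows from the trivial concentration of $\psi_{rp}$ noted after Corollary~\ref{cor:cor1}.

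The step I expect to be the main obstacle is the sign and inequality-direction bookkeeping through the nested $\frac{1}{\c_k}\log\mE(\cdot)^{\c_k}$ operations of (\ref{eq:fl4})--(\ref{eq:fl5}): one must verify that, in the modulo-$\m$ frame, replacing the $\c$-stationarity condition by a maximization over the admissible $\c$ produces exactly the $\le$ direction asserted in (\ref{eq:negthmprac2eq3}), and neither an equality nor the reverse inequality. This is settled by tracing the level-by-level Gaussian interpolation of \cite{Stojnicsflgscompyx23} under the $s=-1$ convention and checking that each lifting level enters with the sign that makes the comparison monotone in the direction needed; since this is precisely the content of the modulo-$\m$ machinery, here it can be invoked rather than re-derived. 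The remaining points---existence and finiteness of $\max_{\c}$ after the ground-state reductions, and the commutation of the $\gamma_{sq}$-optimization with the $\c$-maximization in the $\beta\to\infty$ limit---are routine and follow exactly as in the proof of Theorem~\ref{thme:negthmprac1}.
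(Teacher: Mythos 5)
Your proposal is correct and follows essentially the same route as the paper, whose own proof is a one-line citation of the previous discussion, Theorems \ref{thm:thmsflrdt1} and \ref{thme:negthmprac1}, Corollary \ref{cor:cor1}, and the modulo-$\m$ sfl RDT machinery of \cite{Stojnicsflgscompyx23}. You correctly identify that the only new ingredient is the modulo-$\m$ frame (stationarity imposed on $\p$, $\q$, $\gamma_{sq}$ but not $\c$, yielding the one-sided bound with $\max_{\c}$), and that the perceptron-specific reductions of Section \ref{sec:prac} transfer unchanged.
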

\begin{proof}
Follows from the previous discussion, Theorems \ref{thm:thmsflrdt1} and \ref{thme:negthmprac1}, Corollary \ref{cor:cor1}, and the sfl RDT machinery presented in \cite{Stojnicnflgscompyx23,Stojnicsflgscompyx23,Stojnicflrdt23}.
\end{proof}
We conducted the numerical evaluations using the modulo-$\m$ results of the above theorem and have not found any scenario where the inequality in (\ref{eq:negthmprac2eq3}) is not tight. In other words, we have found that $f_{sq}^{(r)}(\infty)=f_{sq,\m}^{(r)}(\infty)$. This indicates that the \emph{stationarity} over $\c$ is actually of the \emph{maximization} type.

%%%%%%%%%%%%%%%%%%%%%%%%%%%%%%%%%%%%%%%%%%%%%%%%%%%%%%%%%%%%%%%%%%%%%%%%%%%%%%%%
%%%%%%%%%%%%%%%%%%%%%%%%%%%%%%%%%%%%%%%%%%%%%%%%%%%%%%%%%%%%%%%%%%%%%%%%%%%%%%%%
%%%%%%%%%%%%%%%%%%%%%%%%%%%%%%%%%%%%%%%%%%%%%%%%%%%%%%%%%%%%%%%%%%%%%%%%%%%%%%%%
%%%%%%%%%%%%%%%%%%%%%%%%%%%%%%%%%%%%%%%%%%%%%%%%%%%%%%%%%%%%%%%%%%%%%%%%%%%%%%%%
%%%%%%%%%%%%%%%%%%%%%%%%%%%%%%%%%%%%%%%%%%%%%%%%%%%%%%%%%%%%%%%%%%%%%%%%%%%%%%%%
\section{Conclusion}
\label{sec:conc}
%%%%%%%%%%%%%%%%%%%%%%%%%%%%%%%%%%%%%%%%%%%%%%%%%%%%%%%%%%%%%%%%%%%%%%%%%%%%%%%%
%%%%%%%%%%%%%%%%%%%%%%%%%%%%%%%%%%%%%%%%%%%%%%%%%%%%%%%%%%%%%%%%%%%%%%%%%%%%%%%%
%%%%%%%%%%%%%%%%%%%%%%%%%%%%%%%%%%%%%%%%%%%%%%%%%%%%%%%%%%%%%%%%%%%%%%%%%%%%%%%%
%%%%%%%%%%%%%%%%%%%%%%%%%%%%%%%%%%%%%%%%%%%%%%%%%%%%%%%%%%%%%%%%%%%%%%%%%%%%%%%%
%%%%%%%%%%%%%%%%%%%%%%%%%%%%%%%%%%%%%%%%%%%%%%%%%%%%%%%%%%%%%%%%%%%%%%%%%%%%%%%%

The statistical capacity of the classical binary perceptrons with general thresholds $\kappa$ is the main subject of study in this paper. First, we recognize the connection between the statistical perceptron problems as general \emph{random feasibility problems} (rfps) and the \emph{random duality theory} (RDT) \cite{StojnicCSetam09,StojnicICASSP10var,StojnicRegRndDlt10,StojnicGardGen13,StojnicICASSP09}. As the RDT is strongly related to the bilinearly indexed (bli) random processes, we then proceed by further recognizing that studying rfps is directly related to studying bli processes. Relying on a recent progress made  in \cite{Stojnicsflgscompyx23,Stojnicnflgscompyx23} in studying these processes, \cite{Stojnicflrdt23} established a \emph{fully lifted} random duality theory (fl RDT) counterpart to the RDT from \cite{StojnicCSetam09,StojnicICASSP10var,StojnicRegRndDlt10,StojnicGardGen13,StojnicICASSP09}. Utilizing the fl RDT and its a particular \emph{stationarized} variant (called sfl RDT), we create a general framework for studying the perceptrons' capacities. To ensure that the framework and ultimately the entire fl RDT machinery are practically fully operational, a sequence of  underlying numerical evaluations is required as well. After successfully conducting these evaluations,  we uncover that the capacity characterizations are achieved on the second (first non-trivial) level of stationarized full lifting. The obtained results completely match both the replica symmetry breaking predictions obtained through statistical physics methods in \cite{KraMez89} and the rigorous lower bounds obtained through a second moment based analysis in \cite{DingSun19,NakSun23}. In particular, for the most famous, zero-threshold ($\kappa=0$) scenario, we uncover the well known $\approx 0.8330786...$ scaled capacity.

The presented methodology is very generic and easily allows for various extensions and generalizations. These include many related to \emph{random feasibility problems} (rfps) such as  \emph{spherical} perceptrons   (see, e.g., \cite{StojnicGardGen13,StojnicGardSphErr13,StojnicGardSphNeg13,GarDer88,Gar88,Schlafli,Cover65,Winder,Winder61,Wendel,Cameron60,Joseph60,BalVen87,Ven86,SchTir02,SchTir03}), \emph{symmetric binary} perceptrons (see, e.g., \cite{StojnicGardGen13,GarDer88,Gar88,StojnicDiscPercp13,KraMez89,GutSte90,KimRoc98}), the \emph{sectional} compressed sensing $\ell_1$ phase transitions (see, e.g., \cite{StojnicCSetam09,StojnicLiftStrSec13}) as well as many related to generic random structures discussed in \cite{Stojnicnflgscompyx23,Stojnicsflgscompyx23,Stojnicflrdt23}. Similarly to the consideration discussed here, such extensions often require a bit of technical,  problem specific, adjustments that we discuss in separate papers.

As mentioned in \cite{Stojnicflrdt23,Stojnichopflrdt23}, the sfl RDT considerations do not require the standard Gaussianity  assumption of the random primals. Utilizing, for example, the Lindeberg variant \cite{Lindeberg22} of the central limit theorem, the obtained sfl RDT results can quickly be extended to a wide range of different statistics. In that regard, we mention \cite{Chatterjee06} as a particularly elegant utilization of the Lindenberg approach.

%\newpage1
%\setcounter{page}{1}
\begin{singlespace}
\bibliographystyle{plain}
\bibliography{nflgscompyxRefs}
\end{singlespace}

\end{document}